\newcommand{\Z}{\mathbb{Z}}
\newcommand{\Q}{\mathbb{Q}}
\newcommand{\R}{R_3}
\newcommand{\CK}{C_{\mathrm{Kh}}}
\newcommand{\co}{\,\#\,}
\newcommand{\Cobd}{\mathcal{C}ob^3_{\bullet}}
\newcommand{\Cob}{\mathcal{C}ob^3_{\bullet/l}}
\newcommand{\smoothingud}[3]{
\draw (#1,#2) to [out = 45, in = 315] (#1,#2+#3);
\draw (#1+#3,#2) to [out = 135, in = 225] (#1+#3,#2+#3);
}
\newcommand{\smoothinglr}[3]{
\draw(#1,#2) to [out = 45, in = 135] (#1+#3,#2);
\draw(#1,#2+#3) to [out = 315, in = 225] (#1+#3,#2+#3);
}
\newcommand{\sphere}[3]{
\shade[ball color = gray!40, opacity = 0.3] ({#1},{#2}) circle ({#3});
\draw (#1,#2) circle ({#3});
\draw (#1-#3,#2) arc (180:360:#3 and 0.3*#3);
\draw[dashed] (#1+#3,#2) arc (0:180:#3 and 0.3*#3);
}
\newcommand{\spheredot}[3]{
\sphere{#1}{#2}{#3}
\node at (#1-0.3*#3,#2+0.45*#3) [scale = 1.5*#3] {$\bullet$};
}
\newcommand{\plane}[2]{
\shade[color = gray!40, opacity = 0.3] (#1,#2-0.5) -- (#1+1,#2) -- (#1+1,#2+1.5) -- (#1,#2+1) -- (#1,#2-0.5);
\draw (#1,#2-0.5) -- (#1+1,#2) -- (#1+1,#2+1.5) -- (#1,#2+1) -- (#1,#2-0.5);
}
\newcommand{\cylinder}[4]{
\shade[ball color = gray!40, opacity = 0.3] (#1,#2) arc (180:360:0.4*#3 and 0.2*#3) -- (#1+0.8*#3,#2+#4*#3) 
arc (0:180:0.4*#3 and -0.2*#3);
\shade[ball color = gray!40, opacity = 0.15] (#1,#2+#4*#3) arc (180:360:0.4*#3 and -0.2*#3) arc (0:180:0.4*#3 and -0.2*#3);
\draw (#1,#2) arc (180:360:0.4*#3 and 0.2*#3) -- (#1+0.8*#3,#2+#4*#3) 
arc (0:180:0.4*#3 and 0.2*#3) -- (#1,#2);
\draw[dashed] (#1+0.8*#3,#2) arc (0:180:0.4*#3 and 0.2*#3);
\draw (#1,#2+#4*#3) arc (180:360:0.4*#3 and 0.2*#3);
}
\newcommand{\bowl}[3]{
\shade[ball color = gray!40, opacity = 0.3] (#1+0.8*#3,#2) arc (0:180:0.4*#3 and -0.2*#3) -- (#1,#2-0.1*#3) arc (180:360:0.4*#3 and 0.4*#3) -- (#1+0.8*#3,#2);
\shade[ball color = gray!40, opacity = 0.15] (#1,#2) arc (180:360:0.4*#3 and -0.2*#3) arc (0:180:0.4*#3 and -0.2*#3);
\draw (#1+0.8*#3,#2) arc (0:180:0.4*#3 and -0.2*#3) -- (#1,#2-0.1*#3) arc (180:360:0.4*#3 and 0.4*#3) -- (#1+0.8*#3,#2);
\draw (#1,#2) arc (180:360:0.4*#3 and -0.2*#3);
}
\newcommand{\bowldot}[3]{
\bowl{#1}{#2}{#3}
\node at (#1+0.3*#3,#2-0.3*#3) [scale = 0.7*#3] {$\bullet$};
}
\newcommand{\bowlud}[3]{
\shade[ball color = gray!40, opacity = 0.3] (#1,#2) arc (180:360:0.4*#3 and 0.2*#3) -- (#1+0.8*#3,#2+0.1*#3) arc (0:180:0.4*#3 and 0.4*#3) -- (#1,#2);
\draw (#1,#2) arc (180:360:0.4*#3 and 0.2*#3) -- (#1+0.8*#3,#2+0.1*#3) arc (0:180:0.4*#3 and 0.4*#3) -- (#1,#2);
\draw[dashed] (#1+0.8*#3,#2) arc (0:180:0.4*#3 and 0.2*#3);
}
\newcommand{\bowluddot}[3]{
\bowlud{#1}{#2}{#3}
\node at (#1+0.3*#3,#2+0.3*#3) [scale = 0.7*#3] {$\bullet$};
}
\newcommand{\strand}[3]{
\draw (#1,#2) -- (#1,#2+#3);
}
\newcommand{\fstrand}[4]{
\strand{#1}{#2}{#4}
\strand{#1+0.5*#3}{#2}{#4}
\strand{#1+#3}{#2}{#4}
\strand{#1+1.5*#3}{#2}{#4}
}
\newcommand{\tstrand}[4]{
\strand{#1}{#2}{#4}
\strand{#1+0.5*#3}{#2}{#4}
\strand{#1+#3}{#2}{#4}
}
\newcommand{\jcap}[4]{
\draw (#1,#2) to [out=90,in=180] (#1+0.25*#3,#2+0.25*#4) to [out=0,in=90] (#1+0.5*#3,#2);
}
\newcommand{\jcup}[4]{
\draw (#1,#2+#4) to [out=270,in=180] (#1+0.25*#3,#2+0.75*#4) to [out=0,in=270] (#1+0.5*#3,#2+#4);
}
\newcommand{\capcup}[4]{
\jcap{#1}{#2}{#3}{#4}
\jcup{#1}{#2}{#3}{#4}
}
\newcommand{\dcapcup}[4]{
\capcup{#1}{#2}{#3}{#4}
\capcup{#1+#3}{#2}{#3}{#4}
}
\newcommand{\lcapcup}[4]{
\capcup{#1}{#2}{#3}{#4}
\strand{#1+#3}{#2}{#4}
\strand{#1+1.5*#3}{#2}{#4}
}
\newcommand{\mcapcup}[4]{
\strand{#1}{#2}{#4}
\capcup{#1+0.5*#3}{#2}{#3}{#4}
\strand{#1+1.5*#3}{#2}{#4}
}
\newcommand{\rcapcup}[4]{
\strand{#1}{#2}{#4}
\strand{#1+0.5*#3}{#2}{#4}
\capcup{#1+#3}{#2}{#3}{#4}
}
\newcommand{\tlcapcup}[4]{
\capcup{#1}{#2}{#3}{#4}
\strand{#1+#3}{#2}{#4}
}
\newcommand{\trcapcup}[4]{
\strand{#1}{#2}{#4}
\capcup{#1+0.5*#3}{#2}{#3}{#4}
}
\newcommand{\hookl}[4]{
\draw(#1,#2+#4) to [out=90,in=90] (#1-0.2*#3,#2+#4) -- (#1-0.2*#3,#2) to [out=270,in=270] (#1,#2);
}
\newcommand{\bslash}[4]{
\draw(#1+#3,#2) -- (#1,#2+#4);
}
\newcommand{\bslashcps}[4]{
\bslash{#1}{#2}{#3}{#4}
\jcap{#1}{#2}{#3}{#4}
\jcup{#1+0.5*#3}{#2}{#3}{#4}
}
\newcommand{\lbslash}[4]{
\bslashcps{#1}{#2}{#3}{#4}
\strand{#1+1.5*#3}{#2}{#4}
}
\newcommand{\rbslash}[4]{
\strand{#1}{#2}{#4}
\bslashcps{#1+0.5*#3}{#2}{#3}{#4}
}
\newcommand{\dbslash}[4]{
\bslash{#1}{#2}{#3}{#4}
\bslash{#1+0.5*#3}{#2}{#3}{#4}
\jcap{#1}{#2}{#3}{#4}
\jcup{#1+#3}{#2}{#3}{#4}
}
\newcommand{\jslash}[4]{
\draw(#1,#2) -- (#1+#3,#2+#4);
}
\newcommand{\slashcps}[4]{
\jslash{#1}{#2}{#3}{#4}
\jcup{#1}{#2}{#3}{#4}
\jcap{#1+0.5*#3}{#2}{#3}{#4}
}
\newcommand{\lslash}[4]{
\slashcps{#1}{#2}{#3}{#4}
\strand{#1+1.5*#3}{#2}{#4}
}
\newcommand{\rslash}[4]{
\strand{#1}{#2}{#4}
\slashcps{#1+0.5*#3}{#2}{#3}{#4}
}
\newcommand{\ducup}[4]{
\jcap{#1}{#2}{#3}{#4}
\jcap{#1+#3}{#2}{#3}{#4}
\jcup{#1+0.5*#3}{#2}{#3}{#4}
\jcup{#1}{#2-0.5*#4}{3*#3}{1.5*#4}
}
\newcommand{\dlcap}[4]{
\jcup{#1}{#2}{#3}{#4}
\jcup{#1+#3}{#2}{#3}{#4}
\jcap{#1+0.5*#3}{#2}{#3}{#4}
\jcap{#1}{#2}{3*#3}{1.5*#4}
}
\newcommand{\dulcp}[4]{
\jcap{#1+0.5*#3}{#2}{#3}{#4}
\jcup{#1+0.5*#3}{#2}{#3}{#4}
\jcap{#1}{#2}{3*#3}{1.5*#4}
\jcup{#1}{#2-0.5*#4}{3*#3}{1.5*#4}
}
\newtheorem{lemma}{Lemma}[section]
\newtheorem{theorem}[lemma]{Theorem}
\newtheorem{conjecture}[lemma]{Conjecture}
\theoremstyle{definition}
\newtheorem{remark}[lemma]{Remark}
\newtheorem{example}[lemma]{Example}
\begin{document}
\parindent0em
\setlength\parskip{.1cm}
\thispagestyle{empty}
\title{Arbitrarily large torsion in Khovanov cohomology}

\author{Sujoy Mukherjee}
\address{Department of Mathematics, The Ohio State University, Columbus OH, USA}
\email{mukherjee.166@osu.edu}

\author{Dirk Sch\"utz}
\address{Department of Mathematical Sciences, Durham University, Durham, UK}
\email{dirk.schuetz@durham.ac.uk}

\subjclass[2010]{Primary: 57M25 and Secondary: 57M27, 57R56.}
\date{\today.}
\keywords{knots and links; Khovanov cohomology; torsion.}

\begin {abstract}
For any positive integer $k$ and $p\in \{3,5,7\}$ we construct a link which has a direct summand $\Z/p^k\Z$ in its Khovanov cohomology.
\end {abstract}

\maketitle

\section{Introduction}
Khovanov cohomology was introduced in \cite{MR1740682} as a categorification of the Jones polynomial \cite{MR908150} and has since been proven to be an invaluable tool in knot theory. Extensive calculations, see for example \cite{MR2034399, MR3205577}, show an abundance of $2$-torsion in it, while other torsion appears much more rarely. Indeed, less than $200$ of the prime knots with at most $16$ crossings have $4$-torsion in their Khovanov cohomology, and none have $3$-torsion or of order larger than $4$ \cite{MR3205577}.

In \cite{MR2320156} Bar-Natan introduced a more efficient algorithm to calculate Khovanov cohomology with which he detected torsion of order $3$ and $5$ for the torus knot $T(5,6)$, and torsion of order $7$ for $T(7,8)$. Based on this algorithm, in \cite{MR3894728} more examples of knots and links which admit $3$, $5$, and $7$-torsion are given. 
Furthermore, they exhibit the flat $2$-cabling of $T(2,2k+1)$, a $2$-component link of braid index $4$, as a potential example of a link admitting $2^k$-torsion in its Khovanov cohomology. Computationally this has been verified up to $k=23$. While the computed cohomology groups follow a certain pattern which makes it easy to believe this to be true for arbitrary $k$, obtaining a theoretical argument is not obvious.

More recently, in \cite{2019arXiv190606278M} the first author introduces links which admit torsion of order $9$, $27$, $81$, and $25$ in their Khovanov cohomology. Interestingly, these examples are based on connected sums. In contrast, Asaeda and Przytycki, in \cite{MR2147419}, have shown that taking a connected sum of a link with the Hopf link creates an additional copy of the torsion groups present in the Khovanov cohomology of the link but does not create larger ones.

Consider the link $L_3$ which is the closure of the braid word $(\sigma_1\sigma_2\sigma_3)^4 \sigma_1\sigma_2$. Notice that this link is one crossing short of the torus knot $T(4,5)$. Furthermore, it has two components, one of which is an unknot and the other the $T(3,4)$ torus knot; see Figure \ref{fig:thelink} for a diagram.
\begin{figure}[ht]
\includegraphics[height=3cm, width=5cm]{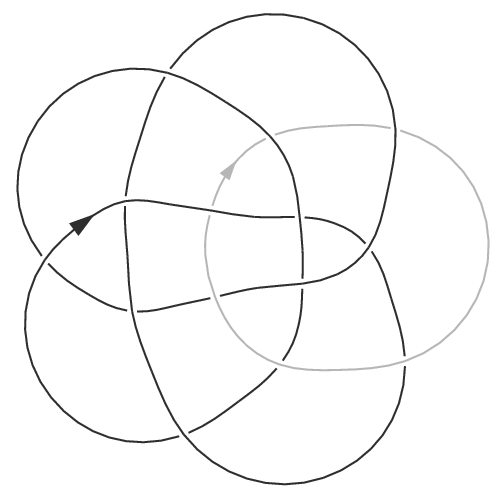}
\caption{\label{fig:thelink}The link $L_3$.}
\end{figure}

It was conjectured in \cite{2019arXiv190606278M} that the Khovanov cohomology of the connected sum 
\[
L^k_3 = L_3 \co \cdots \co L_3 \co T(2,3),
\]
where we assume to have $k$ factors of $L_3$ in the connected sum, contains a direct summand $\Z/3^l\Z$ for all $l\in \{1,\ldots,k\}$. Since $L_3$ is a link, we need to be more precise how the connected sums are formed. We declare that in any consecutive connected sum $L_3\co L_3$ we connect the unknot component of the left $L_3$ with the $T(3,4)$ component of the right $L_3$, and the last $L_3$ has its unknot component connected to $T(2,3)$.

\begin{figure}[ht]
\includegraphics[height=3cm, width=12cm]{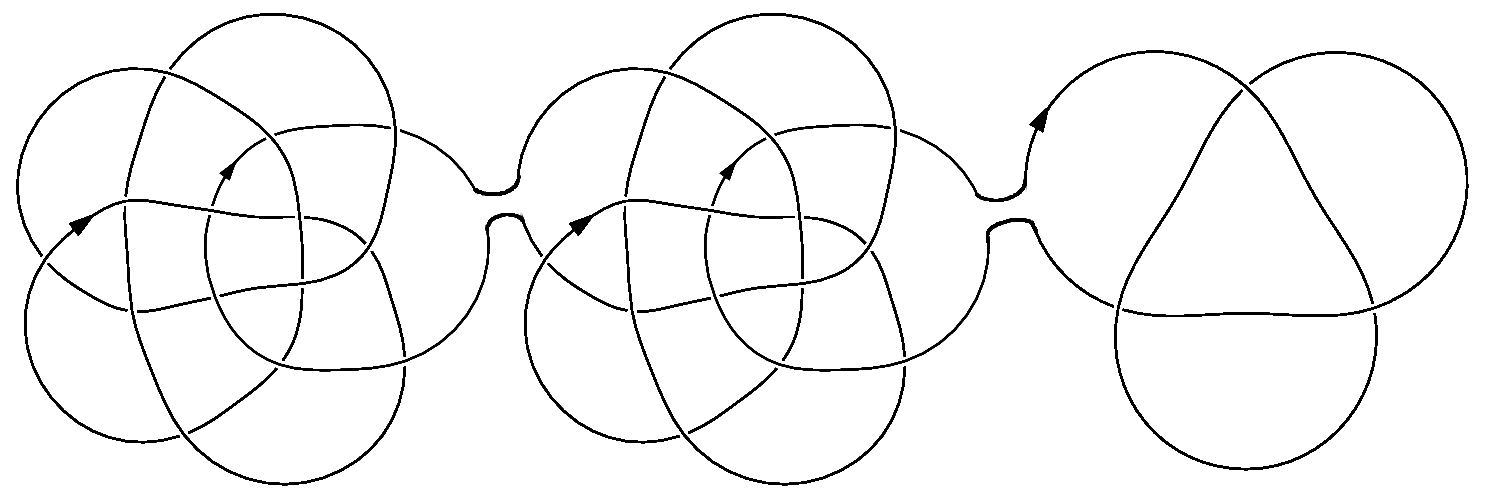}
\caption{The link $L_3^2$.}
\end{figure}

The above mentioned conjecture was based on computer calculations for $k\leq 4$. A striking feature of these calculations is that a $\Z/3^k\Z$ direct summand appears in the highest non-zero homological degree, and the second highest quantum degree. This turns out to be advantageous, since the Khovanov cochain complexes are more accessible towards the ends of the homological degrees.

Furthermore, the cochain complexes for a connected sum are algebraically related to the individual cochain complexes, suggesting these torsion summands should be theoretically justifiable. We show that this is indeed the case.

\begin{theorem}\label{thm:maintheorem}
Let $k$ be a positive integer. Then the Khovanov cohomology of the link $L^k_3$ contains direct summands $\Z/3^l\Z$ for all $l\in \{1,\ldots,k\}$.
\end{theorem}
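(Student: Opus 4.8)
The plan is to work with the unreduced Khovanov complex over $\Z$ and to exploit the K\"unneth-type formula for connected sums. Recall that if $F$ denotes the Frobenius algebra $\Z[X]/(X^2)$ underlying Khovanov cohomology, then for a connected sum $L'\co L''$ one has a short exact sequence of cochain complexes relating $\CK(L'\co L'')$ to $\CK(L')\otimes_F\CK(L'')$, and hence a long exact sequence (or a spectral sequence / a mapping cone description) computing $\mathrm{Kh}(L'\co L'')$ from $\mathrm{Kh}(L')$ and $\mathrm{Kh}(L'')$. Concretely, I would first pin down, once and for all, the relevant piece of $\mathrm{Kh}(L_3)$: namely the generator in the top homological degree and second-highest quantum degree that the computer calculations single out, together with the differential hitting it, presented as an explicit $\Z$-matrix (or a small explicit subquotient complex). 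The point of isolating the \emph{top} homological degree is that there the cochain complex $\CK(L_3)$ is small and fully computable by hand, so this step is a finite check rather than a structural argument.

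Next I would set up an induction on $k$. The inductive step analyses $\CK(L_3^{k}) = \CK(L_3 \co L_3^{k-1})$ via the connected-sum formula: tensoring over $F$ with $\CK(L_3)$ and restricting attention to the top homological degree and the top few quantum degrees, the complex becomes a direct sum of the relevant small block of $\CK(L_3)$ with coefficients twisted by the behaviour of $\CK(L_3^{k-1})$ there. The mechanism I expect is that each additional connected summand $L_3$ contributes, in this corner of the complex, a differential that looks like multiplication by $3$ (this is the source of the $3$-torsion in $L_3$ itself), and that these stack: after $k$ summands the relevant differential, on a suitable generator, is multiplication by $3^k$ up to units and lower-order terms, producing the $\Z/3^k\Z$ summand in $\mathrm{Kh}(L_3^k)$ in the top homological degree and second-highest quantum degree. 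The summands $\Z/3^l\Z$ for $l<k$ then come along for free: either they persist from $\mathrm{Kh}(L_3^{l})$ under the remaining $k-l$ connected sums (connected summing cannot destroy torsion here, in the spirit of Asaeda--Przytycki), or they appear in the intermediate quantum degrees of the same computation.

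The main obstacle will be controlling the connected-sum formula precisely enough to see that the stacked differentials genuinely multiply to $3^k$ rather than, say, producing a $(\Z/3\Z)^k$ or collapsing. This requires understanding not just the isomorphism type of the relevant bit of $\mathrm{Kh}(L_3)$ as an abelian group, but its structure as a module over $F$ (equivalently, the action of the dotted-sphere/merge-split cobordisms), since it is this module structure that dictates how the tensor product's differential is assembled. I would therefore compute, by hand in the top homological degree, the $F$-module map induced by the connected-sum arc, exhibit the single generator on which the iterated differential acts by $3$ each time, and check that no other generators in nearby quantum degrees interfere with it (a bounded check, since quantum degrees are bounded below in each homological degree and we only need the top two). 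A secondary technical point is bookkeeping the homological and quantum gradings through each connected sum — these shift in a controlled way — so that the $\Z/3^l\Z$ summands land where claimed and are genuine direct summands (i.e.\ the relevant $\mathrm{Ext}$ and $\mathrm{Hom}$ contributions split off), which again is a finite verification at the end of the complex rather than a global statement.
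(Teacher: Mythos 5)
Your skeleton coincides with the paper's: the connected-sum isomorphism $\CK^\ast(L_1\co L_2)\cong \CK^\ast(L_1)\otimes_R \CK^\ast(L_2)$ over $R=\Z[X]/\langle X^2\rangle$, a chain-level analysis of the top homological corner of $\CK^\ast(L_3)$, and a multiplicative ``stacking'' of differentials under iterated tensor products. But the torsion mechanism you describe is not the one that operates, and as stated it produces no torsion. The piece of $\CK^\ast(L_3)$ sitting in the top corner is (after simplification) the two-term bimodule complex $R\otimes R\to R\otimes R$, $1\otimes 1\mapsto 3X\otimes 1-1\otimes X$, whose cohomology is free abelian; tensoring $l$ copies of it over $R$ yields, up to homotopy, complexes of the same shape with $3$ replaced by $\pm 3^l$ --- still torsion-free. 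The $\Z/3^l\Z$ summands only materialize when this chain is capped off by the trefoil factor in $L^k_3$: $\CK^\ast(T(2,3))$ splits off a summand of the form $R\{-1\}\xrightarrow{\,2X\,}R\{1\}$ (the paper's $D^\ast(2)$), and $C^\ast(3)^{\otimes l}\otimes_R D^\ast(2)$ contains $D^\ast(2\cdot 3^l)$, i.e.\ multiplication by $2\cdot 3^l$, which gives $\Z/3^l\Z$ after localizing at $3$. Your proposal never mentions the trefoil, attributes the torsion to ``3-torsion in $L_3$ itself'', and does not engage with the fact that $L_3$ has two components, so that an $R$-$R$ bimodule (two-basepoint) structure and the order of gluing are essential: connecting the trefoil to the $T(3,4)$-component instead of the unknot component yields no 3-torsion, so any argument insensitive to which arc carries the module action cannot be right.

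There are two further gaps. First, over $\Z$ the relevant corner is not a direct summand of $\CK^\ast(L_3)$: the paper must invert $2$ and perform Gaussian eliminations with coefficient $\tfrac12$ before the $C^\ast(3)$-piece splits off as a bimodule direct summand, and only direct summand subcomplexes --- not subquotients --- can be fed through $\otimes_R$, since $R$ has infinite global dimension and there is no K\"unneth short exact sequence over $R$ of the kind your long-exact-sequence framing presupposes; integral conclusions are then recovered at the very end because $H^\ast_{\mathrm{Kh}}(\,\cdot\,;\Z_{(3)})$ is the localization of $H^\ast_{\mathrm{Kh}}(\,\cdot\,;\Z)$. Second, the summands $\Z/3^l\Z$ for $l<k$ do not ``come along for free'': Asaeda--Przytycki concerns connected sum with a Hopf link, and no general persistence of torsion under connected sum is available here. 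The paper manufactures these summands from a separate splitting at the bottom of the complex, $\CK^\ast(L_3)\simeq E^\ast\{12\}\oplus G^\ast$ with $E^\ast$ the complex $R\otimes R\xrightarrow{\,\mu\,}R\{1\}$, combined with $C^\ast(m)\otimes_R E^\ast\simeq C^\ast(-m)\{-1\}$, so that using the $E$-factor for $k-l$ of the copies reduces to the $l$-fold case; your scheme needs either this extra structural input or an actual proof of the persistence claim.
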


One may ask whether this works for numbers different from $3$. For any positive integer $n$ we can define a two component link $L_n$ by taking the closure of the braid word $(\sigma_1 \cdots \sigma_n)^{n+1}\sigma_1 \cdots \sigma_{n-1}$, and from this we define a $(k+1)$ component link $L^k_n$ as above. The analogue of Theorem \ref{thm:maintheorem} does indeed hold for $n=5$ and $n=7$, so it may not be unreasonable to expect this result for any odd prime.

\begin{conjecture}\label{con:mainconj}
Let $p$ be an odd prime and $k$ a positive integer. Then the Khovanov cohomology of the link $L^k_p$ contains direct summands $\Z/p^l\Z$ for all $l\in \{1,\ldots,k\}$.
\end{conjecture}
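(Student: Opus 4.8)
We sketch an approach to Conjecture~\ref{con:mainconj}, modeled on the two-stage argument behind Theorem~\ref{thm:maintheorem} but with every computation specific to $L_3$ replaced by one that is uniform in the odd prime $p$. Write $A=\Z[X]/(X^2)$ for the Frobenius algebra underlying Khovanov's TQFT. A connected sum taken along a marked component satisfies $\CK(L_1\co L_2)\simeq\CK(L_1)\otimes_A\CK(L_2)$, where each factor is viewed as a complex of free $A$-modules via the marked circle; iterating, $\CK(L^k_p)\simeq\CK(L_p)^{\otimes_A k}\otimes_A\CK(T(2,3))$, and the homology of the right-hand side is governed by a K\"unneth spectral sequence whose $E_2$-page is built from $\mathrm{Tor}^A$ of the Khovanov cohomologies of the factors. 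Since the desired torsion is to sit in the top homological degree and the second-highest quantum degree, only a bounded window of $\CK(L_p)$ near its maximal homological degree is relevant, so the problem localizes there.

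\textbf{Step 1 (a uniform building block).} First I would produce, for every odd prime $p$, an explicit small complex $B_p$ of free $A$-modules together with a homotopy equivalence between $B_p$ and the truncation of $\CK(L_p)$ to a fixed range of homological degrees just below the top, taken with respect to the $A$-action of the unknot component that participates in the connected sums. The content of this step is that, after delooping and Gaussian elimination, the differentials of $B_p$ are forced to intertwine multiplication by $p$ with the $X$-action; the relevant arithmetic is already visible in the identity $A/(X-p)\cong\Z/p^2\Z$ (with $X$ acting as multiplication by $p$) and, more generally, in the way such relations iterate under $\otimes_A$. For $p=3$ this window is precisely what is analyzed by hand in the proof of Theorem~\ref{thm:maintheorem}; the conjecture asserts that the same pattern persists for all $p$, and it has been checked directly by machine for $p=5$ and $p=7$.

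\textbf{Step 2 (the induction).} Granting Step~1, I would induct on $k$. Writing $L^k_p=L_p\co L^{k-1}_p$ and applying the tensor formula, one feeds the inductive hypothesis --- that the top window of $\CK(L^{k-1}_p)$ splits off, as a complex of $A$-modules, a block whose homology realizes $\Z/p^{k-1}\Z$ with a prescribed $X$-action --- into $(-)\otimes_A B_p$ and reads off the homology on the K\"unneth $E_2$-page. It is the $A$-module structure, rather than a structure over a field, that makes the torsion order grow by one at each stage (over a field, where a connected sum corresponds to a tensor product of reduced complexes, torsion orders merely persist); the lower powers $\Z/p^l\Z$ with $l<k$ survive as the contributions of the positive $\mathrm{Tor}^A$ terms and of the lower-order blocks accumulated in the first $k-1$ steps, which accounts for the full range $l\in\{1,\dots,k\}$. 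Finally --- and here extremality is essential --- one checks that no spectral sequence differential and no extension in the associated filtration can enter or leave the top homological degree in the second-highest quantum degree, so the torsion computed on $E_2$ genuinely appears in $\CK(L^k_p)$. The base case $k=1$ is the assertion that $\CK(L_p\co T(2,3))$ has a $\Z/p\Z$ summand, which again reduces to Step~1 for $L_p$ together with the well-known Khovanov cohomology of the trefoil.

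\textbf{The main obstacle} is Step~1: obtaining a closed description of $\CK(L_p)$ in its top few homological degrees simultaneously for all odd $p$. The natural handle is that $L_p$ is the torus knot $T(p+1,p+2)$ with a single crossing deleted, so one can resolve that crossing in an unoriented skein long exact sequence and combine this with Sto\v{s}i\'{c}-type stabilization of the Khovanov cohomology of torus links in their extreme homological degrees; but since no closed form for the Khovanov cohomology of torus links is available, one must argue directly --- presumably by a second induction, on $p$ --- that the ``multiplication by $p$'' pattern in the differentials of $B_p$ is stable. It is exactly this stability that has so far resisted a proof and has only been verified computationally for $p=5$ and $p=7$, which is why the statement is posed as a conjecture.
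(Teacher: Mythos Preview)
This statement is a conjecture; the paper proves only $p=3$ (Theorem~\ref{thm:maintheorem}) and confirms $p=5,7$ by a machine verification of the key lemma. You correctly locate the obstruction in your Step~1, and the paper in fact refines the conjecture to precisely that statement: that $\CK(L_p;\Z_{(p)})$, viewed as an $R_p$--$R_p$ \emph{bimodule} complex, splits off a specific two-term complex $C^\ast(p;R_p)$ (coboundary $1\otimes 1\mapsto pX\otimes 1-1\otimes X$) in its top two homological degrees.

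Your Step~2, however, takes a different route from the paper and, as written, would be harder to make rigorous. The paper never computes $\mathrm{Tor}$ or runs a K\"unneth spectral sequence; it stays entirely at the level of chain homotopy types of free bimodule complexes and proves, by Gaussian elimination, the closed formulas
\[
C^\ast(n)\otimes_R C^\ast(m)\simeq C^\ast(-nm)\{-2\}\oplus C^\ast(nm)[1]\{2\},\qquad
C^\ast(n)\otimes_R D^\ast(m)\simeq D^\ast(nm)\{-2\}\oplus D^\ast(nm)[1]\{2\}
\]
(Lemmas~\ref{lem:tensorC} and~\ref{lem:tensorCD}). The trefoil contributes a $D^\ast(2)$ summand, and each surplus $L_p$ factor contributes an $E^\ast$ summand which, by Lemma~\ref{lem:removeE}, merely flips the sign in $C^\ast(\pm n)$. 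Iterating yields honest $D^\ast(2p^l)$ direct summands inside the chain complex, so the $\Z/p^l\Z$ torsion is read off with no extension problem and no spectral-sequence differential to control.

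Two ingredients your sketch omits are essential to this mechanism. First, the $R$--$R$ bimodule (two-basepoint) structure is genuinely asymmetric: the paper observes that $D^\ast(m)\otimes_R C^\ast(n)$ does \emph{not} simplify analogously, matching the fact that attaching the trefoil to the $T(3,4)$ component of $L_3$ produces no $3$-torsion; a one-sided $A$-module treatment cannot see this. Second, the localization at $p$ is what makes the clean $C^\ast(p)$ splitting possible --- over $\Z$ the top of $\CK(L_p)$ does not decompose so nicely, and the paper explicitly inverts $2$ (for $p=3$) to perform the needed cancellations.
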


Computations show that Conjecture \ref{con:mainconj} cannot work for $p=2$: the link $L^2_2$ does not have $4$-torsion in its Khovanov cohomology. We note however that our techniques make $2$ look special compared to odd primes. We are forced to invert $2$ in order to simplify the cochain complexes, which then allows us to isolate a good subcomplex which is responsible for the odd torsion summands.

The links $L^k_4$ appear to be a better bet to create torsion of order $2^k$, as calculations for low values of $k$ show. However, the direct summands do not appear in the same pattern as for $p=3,5,$ or $7$, and our techniques would need to be somewhat refined in order to justify $2^k$-torsion. Until then we note that the largest $2$-power torsion we are aware of is the $2^{23}$-torsion observed in \cite{MR3894728}.

One may also wonder about the significance of the trefoil factor in $L_n^k$. As we shall see it does play an important role, although it appears that it can be replaced by any knot different from the unknot.

\subsection*{Acknowledgements}
The authors are grateful to Mikhail Khovanov and J\'{o}zef H.\ Przytycki for their interesting comments and suggestions.

\section{Khovanov cohomology of a connected sum}
\label{sec:connectedsum}
In his fundamental paper \cite{MR1740682} Khovanov introduced a finitely generated free bigraded cochain complex $\CK^\ast(L)$ over $\Z$ for any link diagram $L$ such that the resulting cohomology groups are link invariants.

An important observation in \cite{MR2034399} is that this cochain complex can be considered a finitely generated free complex over $R=\Z[X]/\langle X^2\rangle$ by choosing a basepoint on the link diagram. 

If two based links $L_1$ and $L_2$ are given, we can form their connected sum $L_1\co L_2$ along the basepoints, and by \cite[Prop.3.3]{MR2034399} we can identify
\begin{equation}\label{eq:connectedsum}
 \CK^\ast(L_1\co L_2) \cong \CK^\ast(L_1) \otimes_R \CK^\ast(L_2).
\end{equation}

Notice that since $R$ is commutative, the tensor product also has the structure of an $R$-complex. This corresponds to putting the basepoint for $L_1\co L_2$ on an arc involved in the connected sum. In view of the connected sum we do for $L^k_3$, this is not what we want. 

To resolve this, we put a basepoint on each component of $L_3$, and consider $\CK^\ast(L_3)$ as an $R-R$ bimodule chain complex, with the left action coming from using the basepoint on the $T(3,4)$-component, and the right action coming from using the basepoint on the unknot component.

Since $R$ is commutative, we can think of an $R-R$ bimodule as an $R\otimes R$ left module. In particular, we treat $R\otimes R$ as a free $R-R$ bimodule. However, $R$ itself is not free as an $R\otimes R$ module.

Let us turn $R$ into a graded ring by placing $1\in R$ in grading $1$ and $X$ in grading $-1$. Denote
\[
 \mu \colon R\otimes R \to R\{1\}
\]
the usual multiplication map, and let
\[
 \Delta\colon R \to R\otimes R\{1\}
\]
be given by
\[
 \Delta(1) = 1\otimes X + X\otimes 1, \Delta(X) = X \otimes X.
\]
Here $\{1\}$ denotes a grading shift which makes these maps grading preserving.

\begin{lemma}\label{lem:tensorR}
With the notations as above,
\[
 (R\otimes R)\otimes_R (R\otimes R) \cong R\otimes R \{1\} \oplus R\otimes R\{-1\},
\]
as $R-R$ bimodules. Furthermore, the basis of $(R\otimes R)\otimes_R (R\otimes R)\cong R\otimes R\otimes R$, when viewed as a left $R\otimes R$ module, is given by $1\otimes 1\otimes 1$ and $1\otimes X\otimes 1$.
\end{lemma}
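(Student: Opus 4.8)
The plan is to collapse the middle tensor product using the canonical isomorphism $R\otimes_R R\cong R$ and then read off a free basis together with its gradings. Recall that $R\otimes R$, as an $R$-$R$ bimodule, is free of rank one with the left action on the first tensor factor and the right action on the second. Hence forming $(R\otimes R)\otimes_R(R\otimes R)$ amounts to identifying the right action on the first copy (i.e.\ on its second factor) with the left action on the second copy (on its first factor). First I would verify that the $\Z$-linear map
\[
R\otimes R\otimes R\otimes R\longrightarrow R\otimes R\otimes R,\qquad a\otimes b\otimes c\otimes d\longmapsto a\otimes bc\otimes d,
\]
descends to a well-defined bimodule isomorphism $(R\otimes R)\otimes_R(R\otimes R)\xrightarrow{\ \cong\ }R\otimes R\otimes R$, where on the right the left action is on the first factor and the right action on the third. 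This is just the associativity/unit identification for the tensor product, and it is grading preserving since $\deg(bc)=\deg b+\deg c$.

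Next I would show that $R\otimes R\otimes R$ is free of rank two as a left $R\otimes R$-module. Since that action only touches the first and third factors, the $\Z$-linear bijection
\[
R\otimes R\otimes R\longrightarrow (R\otimes R)\otimes_{\Z}R,\qquad a\otimes b\otimes c\longmapsto (a\otimes c)\otimes b,
\]
is in fact an isomorphism of left $R\otimes R$-modules. Because $R$ is free over $\Z$ with basis $\{1,X\}$, it follows that $(R\otimes R)\otimes_{\Z}R$, and hence $R\otimes R\otimes R$, is free over $R\otimes R$ on $\{1\otimes 1\otimes 1,\ 1\otimes X\otimes 1\}$, which is the basis claimed in the lemma.

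Finally I would track the internal grading. The generator $1\otimes 1\otimes 1$ sits in degree $1+1+1=3$ and the generator $1\otimes X\otimes 1$ in degree $1+(-1)+1=1$; since the free rank-one bimodule $R\otimes R$ has its generator $1\otimes 1$ in degree $2$, the sub-bimodule generated by $1\otimes 1\otimes 1$ is isomorphic to $R\otimes R\{1\}$ and the one generated by $1\otimes X\otimes 1$ to $R\otimes R\{-1\}$. As these two sub-bimodules span $R\otimes R\otimes R$ and intersect trivially, this yields the asserted decomposition. I do not expect a genuine obstacle here: the argument is essentially bookkeeping, and the only point needing real care is to fix unambiguously which of the two $R$-actions on each factor $R\otimes R$ is used to form $\otimes_R$ (hence which tensor slots are retained and which become the residual bimodule structure), and to carry the grading shifts through consistently.
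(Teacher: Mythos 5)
Your proof is correct and takes essentially the paper's route: identify $(R\otimes R)\otimes_R(R\otimes R)$ with $R\otimes R\otimes R$, note that the $R$-$R$ bimodule (equivalently left $R\otimes R$-module) structure only touches the outer factors so the module is free on $1\otimes 1\otimes 1$ and $1\otimes X\otimes 1$, and read off the shifts $\{1\}$, $\{-1\}$ from the degrees $3$ and $1$ of these generators against the degree-$2$ generator of $R\otimes R$ --- your $(R\otimes R)\otimes_{\Z}R$ observation simply spells out what the paper calls ``easily seen to be an isomorphism.'' One minor slip worth noting: with the paper's grading ($\deg 1=1$, $\deg X=-1$) multiplication in $R$ is not degree-additive (e.g.\ $\deg(1\cdot 1)=1$, not $2$), so your justification that the collapse map preserves grading via $\deg(bc)=\deg b+\deg c$ is off by the customary shift in how the tensor product over $R$ is graded; this does not affect the substance, since the grading check that the lemma actually needs is the one you perform on the two generators.
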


\begin{proof}
Obviously, $(R\otimes R)\otimes_R (R\otimes R) \cong R\otimes R\otimes R$. Also, the latter is generated by $1\otimes 1 \otimes 1$ and $1\otimes X \otimes 1$ as a $R-R$ bimodule. Furthermore, the $R-R$ bimodule map
\[
 \varphi\colon R\otimes R \{1\} \oplus R\otimes R\{-1\} \to R\otimes R\otimes R
\]
defined by sending $1\otimes 1\in R\otimes R \{1\}$ to $1\otimes 1 \otimes 1$, and $1\otimes 1\in R\otimes R\{-1\}$ to $1\otimes X\otimes 1$ is grading preserving, and easily seen to be an isomorphism.
\end{proof}

For every $n\in \Z$ define a $R-R$ bimodule cochain complex $C^\ast(n)$ concentrated in homological degrees $0$ and $1$ by $C^0(n)=R\otimes R\{-1\}$, $C^1(n) = R\otimes R\{1\}$, and the coboundary $\delta_n\colon C^0(n) \to C^1(n)$ given by
\[
 \delta_n(1\otimes 1) = nX\otimes 1 - 1\otimes X.
\]
We need a notation to indicate a shift in homological degrees, which we express by
\[
 C^\ast[k] = C^{\ast-k}
\]
for $k\in \Z$, and $C^{\ast}$ a general cochain complex.

\begin{lemma}\label{lem:tensorC}
Let $n,m\in \Z$. Then
\[
 C^\ast(n) \otimes_R C^\ast(m) \simeq C^\ast(-nm)\{-2\} \oplus C^\ast(nm)[1]\{2\}
\]
as $R-R$ bimodule complexes. Here $\simeq$ means chain homotopy equivalent.
\end{lemma}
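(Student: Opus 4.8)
The plan is to tensor the two two-term complexes directly and then decompose the resulting complex of $R$–$R$ bimodules. First I would write out $C^\ast(n)\otimes_R C^\ast(m)$ explicitly: it is concentrated in homological degrees $0,1,2$, with $C^0(n)\otimes_R C^0(m) = (R\otimes R\{-1\})\otimes_R(R\otimes R\{-1\})$, the degree-$1$ part the sum of $C^0(n)\otimes_R C^1(m)$ and $C^1(n)\otimes_R C^0(m)$, and the degree-$2$ part $C^1(n)\otimes_R C^1(m)$. By Lemma \ref{lem:tensorR} each of these four terms is isomorphic to $R\otimes R\otimes R$ up to an overall grading shift, with the left $R\otimes R$-basis $1\otimes 1\otimes 1$ and $1\otimes X\otimes 1$; I would fix these identifications once and for all so that the two coboundary maps become explicit $R\otimes R$-linear maps recorded on that two-element basis. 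The map $\delta_n\otimes\mathrm{id}$ and $\mathrm{id}\otimes\delta_m$ act by $nX\otimes 1 - 1\otimes X$ in the appropriate tensor slot; after rewriting an element like $X\otimes 1\otimes 1$ in the basis $\{1\otimes 1\otimes 1,\,1\otimes X\otimes 1\}$ (using that $X\cdot(1\otimes X\otimes 1)=X\otimes X\otimes 1$, etc.), these become explicit $2\times 2$ matrices over $R\otimes R$.

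Next I would exhibit the claimed splitting. The summand $C^\ast(-nm)\{-2\}$ should come from the sub/quotient generated by the ``symmetric'' combination in homological degree $1$, and $C^\ast(nm)[1]\{2\}$ from the homological degrees $1$ and $2$ part; concretely I expect one new basis vector in each homological degree $1,2$ to generate the shifted $C^\ast(nm)[1]\{2\}$ piece, and the degree-$0$ term together with the remaining degree-$1$ vector to give $C^\ast(-nm)\{-2\}$. I would define explicit $R$–$R$ bimodule chain maps in both directions between $C^\ast(n)\otimes_R C^\ast(m)$ and the proposed direct sum, check they are grading preserving and commute with the differentials, and then produce an explicit chain homotopy witnessing that one composite is the identity up to homotopy (the other composite will typically be the identity on the nose). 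The homotopy is only needed in homological degree $1$, where there is a rank-$2$ indeterminacy, so it amounts to writing down one $R\otimes R$-linear map $C^1(n)\otimes_R C^0(m)\oplus C^0(n)\otimes_R C^1(m)\to C^0(n)\otimes_R C^0(m)$ and verifying a single identity.

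The main obstacle is bookkeeping rather than conceptual: $R$ is not free over $R\otimes R$, so the naive ``tensor the matrices'' picture lives over $R\otimes R\otimes R$, and one has to be careful that the change-of-basis identifications from Lemma \ref{lem:tensorR} are applied consistently in all four corners and are genuinely $R$–$R$ bimodule maps (not just left $R\otimes R$-maps), since the statement is an equivalence of bimodule complexes. I expect the appearance of $-nm$ in one summand to drop out of exactly this computation: one coboundary contributes $+n$ and the other $-m$ (or $-1$ after normalizing a slot), and the cross term in the composite $\delta$ produces the product with a sign. A secondary point to get right is the grading: one must track the $\{1\}$ and $\{-1\}$ shifts through $\otimes_R$ and through Lemma \ref{lem:tensorR}'s $\{1\}\oplus\{-1\}$, and confirm that the totals match $\{-2\}$ and $\{2\}$ respectively. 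Once the identifications and shifts are pinned down, verifying the chain maps and the homotopy is a finite check on two basis elements per homological degree.
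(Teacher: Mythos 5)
Your route is essentially the paper's: expand $C^\ast(n)\otimes_R C^\ast(m)$ over homological degrees $0,1,2$, use Lemma \ref{lem:tensorR} to rewrite each corner as a free left $R\otimes R$-module on $1\otimes 1\otimes 1$ and $1\otimes X\otimes 1$, and then reduce the resulting diagram. The paper performs the reduction by two applications of Gaussian elimination \cite[Lem.3.2]{MR2320156}: the identity component between the $R\otimes R\{-1\}$ summands in degrees $0$ and $1$ and the $-$identity component between the $R\otimes R\{1\}$ summands in degrees $1$ and $2$ are cancelled, and the resulting zig-zag terms such as $-(m\otimes 1)\circ(1\otimes 1)^{-1}\circ(nX\otimes 1)$ are exactly what produces the factor $nm$, while the horizontal compositions vanish because $nX\cdot nX=0$ in $R$. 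Your plan of writing explicit chain maps both ways plus a homotopy is this same reduction done by hand.

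As stated, though, your verification plan has a concrete gap in the homotopy bookkeeping. After applying Lemma \ref{lem:tensorR}, the tensor complex is free over $R\otimes R$ of ranks $2,4,2$ in degrees $0,1,2$, whereas the claimed direct sum $C^\ast(-nm)\{-2\}\oplus C^\ast(nm)[1]\{2\}$ has ranks $1,2,1$. Hence your anticipated splitting (``the degree-$0$ term together with the remaining degree-$1$ vector gives $C^\ast(-nm)\{-2\}$'') cannot be literally correct: only a rank-one direct summand of the degree-$0$ term survives, the other being cancelled against a degree-$1$ summand, and likewise one degree-$1$ summand is cancelled against half of the degree-$2$ term. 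For the same reason the homotopy cannot be supported only in homological degree $1$: if $\iota$ and $\rho$ denote your two chain maps, then in degree $2$ the composite $\iota\circ\rho$ factors through a free module of rank one and so cannot equal the identity of the rank-two degree-$2$ term, and since the complex vanishes in degree $3$ the homotopy identity there forces a nonzero homotopy component from degree $2$ to degree $1$; there are then identities to check in all three degrees, not ``a single identity''. This is fixable: add the second homotopy component, or simply cancel the two isomorphism components by Gaussian elimination as the paper does, which packages both homotopies and the zig-zag corrections (and then identify $1\otimes X-nmX\otimes 1$ with $\delta_{nm}$ after negating one generator, as the paper implicitly does).
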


\begin{proof}
We write $C^\ast(n) \otimes_R C^\ast(m)$ as
\[
\begin{tikzpicture}
\node at (0,1.5) {$R\otimes R\otimes R\{-2\}$};
\node at (4,1.5) {$R\otimes R\otimes R$};
\node at (4,0) {$R\otimes R\otimes R$};
\node at (8,0) {$R\otimes R\otimes R\{2\}.$};
\draw[->] (1.3,1.5) -- node [above] {$\delta_n\otimes 1$} (3.1,1.5);
\draw[->] (1.3,1.4) -- node [above,sloped] {$1\otimes \delta_m$} (3.1,0.1);
\draw[->] (4.9,1.4) -- node [above,sloped] {$-1\otimes \delta_m$} (6.8,0.1);
\draw[->] (4.9,0) -- node [above] {$\delta_n\otimes 1$} (6.8,0);
\end{tikzpicture}
\]
Using Lemma \ref{lem:tensorR} we can write this as
\[
\begin{tikzpicture}
\node at (0,3) {$R\otimes R\{-1\}$};
\node at (0,2) {$R\otimes R\{-3\}$};
\node at (5,3) {$R\otimes R\{1\}$};
\node at (5,2) {$R\otimes R\{-1\}$};
\node at (5,1) {$R\otimes R\{1\}$};
\node at (5,0) {$R\otimes R\{-1\}$};
\node at (10,1) {$R\otimes R\{3\}$};
\node at (10,0) {$R\otimes R\{1\}$};
\draw[->] (0.9,3.15) -- node [above,sloped,scale=0.6] {$nX\otimes 1$} (4.1,3);
\draw[->] (0.9,3.05) -- node [above,sloped,scale=0.6] {$1\otimes 1$} (4.1,2.1);
\draw[->] (0.9,2.95) -- node [above,sloped,scale=0.6,near end] {$-1\otimes X$} (4.1,1);
\draw[->] (0.9,2.85) -- node [above,sloped,scale=0.6,near end] {$m\otimes 1$} (4.1,0.1);
\draw[-,line width=6pt,draw=white] (0.9,2) -- (4,2);
\draw[->] (0.9,2) -- node [above,scale=0.6,near end] {$nX\otimes 1$} (4.1,2);
\draw[->] (0.9,1.9) -- node [above,sloped,scale=0.6] {$-1\otimes X$} (4.1,0);
\draw[->] (5.9,3.05) -- node [above,sloped,scale=0.6] {$1\otimes X$} (9.2,1.1);
\draw[->] (5.9,2.95) -- node [above,sloped,scale=0.6] {$-m\otimes 1$} (9.2,0.2);
\draw[->] (5.9,2) -- node [above,sloped,scale=0.6,near start] {$1\otimes X$} (9.2,0.1);
\draw[-,line width=6pt,draw=white] (5.9,1) -- (9,1);
\draw[->] (5.9,1) -- node [above,scale=0.6,near start] {$nX\otimes 1$} (9.2,1);
\draw[->] (5.9,0.9) -- node [above,scale=0.6,sloped] {$-1\otimes 1$} (9.2,0);
\draw[->] (5.9,0) -- node [above,scale=0.6,sloped] {$nX\otimes 1$} (9.2,-0.1);
\end{tikzpicture}
\]
Between the various direct summands we detect two isomorphisms, namely the identity between the $R\otimes R\{-1\}$ summands in homological degrees $0$ and $1$ (the higher one in homological degree $1$), and $-$identity between the $R\otimes R\{1\}$ summands in homological degrees $1$ and $2$ (the lower one in homological degree $1$).

We can now perform \em Gaussian elimination \em \cite[Lem.3.2]{MR2320156} on these direct summands to get
\[
\begin{tikzpicture}
\node at (0,1) {$R\otimes R\{-3\}$};
\node at (4,1) {$R\otimes R\{1\}$};
\node at (4,0) {$R\otimes R\{-1\}$};
\node at (8,0) {$R\otimes R\{3\}$};
\draw[->] (0.9,1) -- node [above,sloped,scale=0.6] {$-1\otimes X - nmX\otimes 1$} (3.1,0);
\draw[->] (4.8,1) -- node [above,sloped,scale=0.6] {$1\otimes X - nmX\otimes 1$} (7.2,0);
\end{tikzpicture}
\]
Notice that Gaussian elimination leads to a `zig-zag' for the surviving direct summands. In particular, the first coboundary has the $-1\otimes X$ summand from the previous complex, while the $-nmX\otimes 1$ summand is the `zig-zag' coming from the composition $-(m\otimes 1)\circ (1\otimes 1)^{-1}\circ (nX\otimes 1)$.

Also, the horizontal arrows are $0$ because $nX\cdot nX=0$ in $R$.

Since Gaussian elimination preserves the chain homotopy type over an additive category \cite{MR2320156}, we get the result.
\end{proof}

\begin{remark}
As an abelian group, $R\otimes R$ is a free abelian group of rank $4$, generated by $1\otimes 1$, $X\otimes 1$, $1\otimes X$ and $X\otimes X$. The matrix of $\delta_n$ in terms of this basis is given by
\[
 \Delta_n = 
\begin{pmatrix} 
0 & 0 & 0 & 0 \\
n & 0 & 0 & 0 \\
-1 & 0 & 0 & 0 \\
0 & -1 & n & 0  
\end{pmatrix}
.
\]
It follows that the cohomology of this cochain complex is free abelian of rank $2$ in both homological degrees.
\end{remark}
For $m\in \Z$ define a left $R$-module cochain complex $D^\ast(m)$ concentrated in homological degrees $0$ and $1$ by $D^0(m) = R\{-1\}$, $D^1(m)=R\{1\}$, with coboundary $\nu_m$ given by
\[
 \nu_m(1) = mX.
\]
Clearly the cohomology of this complex, treated as abelian groups, has torsion of order $m$, but more importantly we have the following.

\begin{lemma}\label{lem:tensorCD}
Let $n,m\in \Z$. Then
\[
 C^\ast(n) \otimes_R D^\ast(m) \simeq D^\ast(nm)\{-2\} \oplus D^\ast(nm)[1]\{2\}
\]
as left $R$-module complexes.
\end{lemma}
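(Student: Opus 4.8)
The plan is to mimic the proof of Lemma \ref{lem:tensorC} almost verbatim, replacing the right-hand tensor factor $C^\ast(m)$ by $D^\ast(m)$ and carrying out the same Gaussian elimination. First I would write out $C^\ast(n)\otimes_R D^\ast(m)$ explicitly: it is the total complex of the square with corners $R\otimes R\otimes_R R\{-2\}$, $R\otimes R\otimes_R R$ (twice, in homological degree $1$), and $R\otimes R\otimes_R R\{2\}$, with horizontal maps $\delta_n\otimes 1$ and vertical maps $\pm(1\otimes\nu_m)$. The key simplification is that $R\otimes R\otimes_R R\cong R\{?\}$ as a \emph{left} $R$-module — more precisely, using $\mu\colon R\otimes R\to R\{1\}$ on the last two factors, $(R\otimes R)\otimes_R R\cong R\otimes R\otimes_R R$, and as a left $R$-module this is generated by $1\otimes 1\otimes 1$ and $1\otimes X\otimes 1$, i.e.\ it splits as $R\{-1\}\oplus R\{-3\}$ (analogously to Lemma \ref{lem:tensorR}, but with only two tensor factors of $R$ over the ground ring and the module structure collapsing the right action). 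This is the step I would spell out carefully, since it is where the behaviour differs from Lemma \ref{lem:tensorC}: here the output is a left $R$-module complex, not an $R$--$R$ bimodule complex, because $D^\ast(m)$ only has a left action.

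Next I would rewrite the square using these splittings, obtaining a diagram with four summands in homological degree $1$ (two from each copy of $R\otimes R\otimes_R R$) and the appropriate summands in degrees $0$ and $2$, with all the structure maps read off from $\delta_n$, $\nu_m$, and the isomorphism $\varphi$. Among these maps I expect to find two isomorphisms between pairs of degree-$1$ summands (an identity and a $\pm$identity, exactly as in the proof of Lemma \ref{lem:tensorC}), on which I perform Gaussian elimination \cite[Lem.3.2]{MR2320156}. The surviving complex should be concentrated in homological degrees $0,1,2$ with a single $R$-summand in each of degrees $0$ and $2$ and two $R$-summands in degree $1$, and the new coboundaries should be $\pm(X\text{-multiplication})$ twisted by $-nm$ coming from the zig-zag composition — giving precisely $D^\ast(nm)\{-2\}$ sitting in degrees $0,1$ and $D^\ast(nm)[1]\{2\}$ sitting in degrees $1,2$. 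Since Gaussian elimination preserves chain homotopy type over an additive category, this yields the claimed equivalence.

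The main obstacle, such as it is, is bookkeeping: tracking grading shifts through the module isomorphism $R\otimes R\otimes_R R\cong R\{-1\}\oplus R\{-3\}$ and making sure the signs in the two vertical maps $\pm(1\otimes\nu_m)$ propagate correctly so that the zig-zag contributes $-nmX$ rather than $+nmX$, and that the two surviving pieces land in the stated homological degrees with the stated shifts. One should also check that, as in Lemma \ref{lem:tensorC}, the ``horizontal'' composite maps in the reduced complex vanish because $nX\cdot nX = 0$ in $R$, so that the two $D^\ast(nm)$ blocks are genuinely direct summands and not merely a filtration. I do not anticipate any conceptual difficulty beyond what already appears in the proof of Lemma \ref{lem:tensorC}; the statement is essentially the ``one-sided'' shadow of that lemma, obtained by applying $-\otimes_R D^\ast(m)$ in place of $-\otimes_R C^\ast(m)$ and remembering only the left $R$-action.
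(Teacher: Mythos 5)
Your proposal is correct and follows essentially the same route as the paper: identify each term $R\otimes R\otimes_R R\cong R\otimes R$ and split it into two left $R$-summands as in Lemma \ref{lem:tensorR}, then perform two Gaussian eliminations (one pairing a degree-$0$ with a degree-$1$ summand, the other a degree-$1$ with a degree-$2$ summand), with the zig-zags producing $nmX$ and $-nmX$ and the other composites vanishing because $nX\cdot nX=0$. The only cosmetic difference is that the paper records the second surviving piece as $D^\ast(-nm)[1]\{2\}$ and removes the sign by a change of basis, which is exactly the bookkeeping point you flagged.
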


\begin{proof}
This is very similar to the proof of Lemma \ref{lem:tensorC}. We have $C^\ast(n) \otimes_R D^\ast(m)$ is given by
\[
\begin{tikzpicture}
\node at (0,1.5) {$R\otimes R\{-2\}$};
\node at (4,1.5) {$R\otimes R$};
\node at (4,0) {$R\otimes R$};
\node at (8,0) {$R\otimes R\{2\}.$};
\draw[->] (1,1.5) -- node [above] {$\delta_n\otimes 1$} (3.4,1.5);
\draw[->] (1,1.4) -- node [above,sloped] {$1\otimes mX$} (3.4,0);
\draw[->] (4.6,1.5) -- node [above,sloped] {$-1\otimes mX$} (7.15,0.1);
\draw[->] (4.6,0) -- node [above] {$\delta_n\otimes 1$} (7.15,0);
\end{tikzpicture}
\]
Since we treat this as a left $R$-module complex, we can use $R\otimes R\cong R\{1\}\oplus R\{-1\}$ as left $R$-modules by the same argument as in Lemma \ref{lem:tensorR}. The basis of $R\otimes R$ is given by $1\otimes 1$ and $1\otimes X$. In this basis the cochain complex is
\[
\begin{tikzpicture}
\node at (0,3) {$R\{-1\}$};
\node at (0,2) {$R\{-3\}$};
\node at (4,3) {$R\{1\}$};
\node at (4,2) {$R\{-1\}$};
\node at (4,1) {$R\{1\}$};
\node at (4,0) {$R\{-1\}$};
\node at (8,1) {$R\{3\}$};
\node at (8,0) {$R\{1\}$};
\draw[->] (0.6,3.1) -- node [above,sloped] {$nX$} (3.5,3);
\draw[->] (0.6,3.0) -- node [above,sloped] {$-1$} (3.4,2.1);
\draw[->] (0.6,2.9) -- node [above,sloped,near end] {$m$} (3.5,0);
\draw[-,line width=6pt,draw=white] (0.6,2) -- (3.2,2);
\draw[->] (0.6,2) -- node [above] {$nX$} (3.4,2);
\draw[->] (4.5,3) -- node [above,sloped] {$-m$} (7.5,0.2);
\draw[-,line width=6pt,draw=white] (4.5,1) -- (7.2,1);
\draw[->] (4.5,1) -- node [above,near start] {$nX$} (7.5,1);
\draw[->] (4.5,0.9) -- node [above,sloped] {$-1$} (7.5,0.1);
\draw[->] (4.6,0) -- node [above] {$nX$} (7.5,0);
\end{tikzpicture}
\]
Again we can perform two Gaussian eliminations, after which the surviving $R\{-3\}$ summand in homological degree $0$ together with the surviving $R\{-1\}$ summand in homological degree $1$ form the complex $D^\ast(nm)\{-2\}$. The remaining two summands form a direct summand $D^\ast(-nm)[1]\{2\}$, but we can remove the $-1$ factor with a change of basis.
\end{proof}

The complexes $D^\ast(m)$ can also be viewed as right $R$-module complexes or $R-R$ bimodule complexes. The reader may want to convince themselves that the analogous statement of Lemma \ref{lem:tensorCD} for the right $R$-module complex $D^\ast(m)\otimes_R C^\ast(n)$ cannot be derived in this way.
In fact, this is not possible, as the Khovanov cohomology of $T(2,3)\co L_3$ does not contain $3$-torsion when we connect $T(2,3)$ to the $T(3,4)$-component of $L_3$.

Let us introduce another $R-R$ bimodule cochain complex $E^\ast$, concentrated in homological degrees $0$ and $1$, as follows. We set $E^0 = R\otimes R$, $E^1 = R\{1\}$ and the coboundary is given by the multiplication map $\mu$, which is a bimodule map.

\begin{lemma}\label{lem:removeE}
Let $n\in \Z$. The $R-R$ bimodule complex $C^\ast(m)\otimes_R E^\ast$ is chain homotopy equivalent to $C^\ast(-m)\{-1\}$ as an $R-R$ bimodule complex.
\end{lemma}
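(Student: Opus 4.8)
The plan is to mimic the proof of Lemma \ref{lem:tensorC}, writing out the tensor product $C^\ast(m)\otimes_R E^\ast$ as an explicit bimodule complex, decomposing each entry via Lemma \ref{lem:tensorR}, spotting an acyclic direct summand, and removing it by Gaussian elimination.

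\begin{proof}
First I would write $C^\ast(m)\otimes_R E^\ast$ as the total complex of
\[
\begin{tikzpicture}
\node at (0,1.5) {$R\otimes R\otimes R\{-1\}$};
\node at (4.5,1.5) {$R\otimes R\otimes R\{1\}$};
\node at (4.5,0) {$R\otimes R\{-1\}\{1\}$};
\node at (9,0) {$R\otimes R\{1\}\{1\}$};
\draw[->] (1.4,1.5) -- node [above] {$\delta_m\otimes 1$} (3.4,1.5);
\draw[->] (1.4,1.4) -- node [above,sloped] {$1\otimes \mu$} (3.4,0.1);
\draw[->] (5.6,1.4) -- node [above,sloped] {$-1\otimes \mu$} (7.8,0.1);
\draw[->] (5.6,0) -- node [above] {$\delta_m\otimes 1$} (7.8,0);
\end{tikzpicture}
\]
where in homological degree $0$ we have $C^0(m)\otimes_R E^0 = R\otimes R\{-1\}\otimes_R R\otimes R \cong R\otimes R\otimes R\{-1\}$ and $C^1(m)\otimes_R E^0 \oplus C^0(m)\otimes_R E^1$ in homological degree $1$, with $C^1(m)\otimes_R E^1 = R\otimes R\{1\}\otimes_R R\{1\} \cong R\otimes R\{1\}\{1\}$ in homological degree $2$. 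The middle entry $C^0(m)\otimes_R E^1 = R\otimes R\{-1\}\otimes_R R\{1\}\cong R\otimes R$; the only subtlety is that the right action on the $E$-side is the \emph{ordinary} $R$-action on $R\{1\}$, not a tensor-factor action, so the identification of $R\otimes R\otimes_R R$ with $R\otimes R$ is simply $a\otimes b\otimes c\mapsto a\otimes bc$, which is a genuine bimodule isomorphism and needs no appeal to Lemma \ref{lem:tensorR}. For the two entries of the form $(R\otimes R)\otimes_R(R\otimes R)$ I would apply Lemma \ref{lem:tensorR} to split each as $R\otimes R\{1\}\oplus R\otimes R\{-1\}$ with basis $1\otimes 1\otimes 1$ and $1\otimes X\otimes 1$.

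Next I would expand all four maps in this refined basis. The key computation is that $\mu$ sends $1\otimes 1\otimes 1\mapsto 1\otimes 1$ and $1\otimes X\otimes 1\mapsto 1\otimes X$ (reading off $bc$), so $1\otimes\mu$ is, up to the grading bookkeeping, an identity map from one of the Lemma \ref{lem:tensorR} summands in homological degree $0$ onto the middle term $C^0(m)\otimes_R E^1$; more precisely the $R\otimes R\{1\}$ summand of $R\otimes R\otimes R\{-1\}$ maps isomorphically onto $R\otimes R$ sitting in homological degree $1$. This is exactly the situation for a single Gaussian elimination \cite[Lem.3.2]{MR2320156}. Performing it kills the middle vertex and the $R\otimes R\{1\}$ summand in homological degree $0$, leaving a two-term complex
\[
\begin{tikzpicture}
\node at (0,0) {$R\otimes R\{-1\}$};
\node at (5,0) {$R\otimes R\{1\}$};
\draw[->] (1.4,0) -- node [above,scale=0.8] {$\text{zig-zag}$} (3.6,0);
\end{tikzpicture}
\]
where the surviving degree-$0$ term is the $R\otimes R\{-1\}$ half of $C^0(m)\otimes_R E^0$ and the surviving degree-$1$ term is $C^1(m)\otimes_R E^0\cong R\otimes R\{1\}$. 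The new coboundary is the original $\delta_m\otimes 1$ restricted to the surviving summand, plus the zig-zag $-(-1\otimes\mu)\circ(1\otimes\mu)^{-1}\circ(\delta_m\otimes 1)$ through the eliminated terms; I would check that this evaluates to $1\otimes 1\mapsto -mX\otimes 1 + 1\otimes X = -(mX\otimes 1 - 1\otimes X)$, i.e. $-\delta_m$ after the grading shift, so the resulting complex is $C^\ast(-m)\{-1\}$ (the extra overall $\{1\}$ grading from $E^1$ explains the $\{-1\}$ shift, and the sign is absorbed into $C^\ast(-m)$ via $\delta_{-m}=-\delta_m$).

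The main obstacle I anticipate is purely bookkeeping: tracking the three separate grading shifts ($\{-1\}$ and $\{1\}$ from $C^\ast(m)$, and $\{1\}$ from $E^1$) through the Lemma \ref{lem:tensorR} splitting so that the final answer lands on the nose as $C^\ast(-m)\{-1\}$ rather than some shift of it, and getting the sign of the zig-zag right so that $-m$ rather than $+m$ appears. There is also a small point to verify that the horizontal map $\delta_m\otimes 1$ in the bottom row of the square, which becomes irrelevant after elimination, does not obstruct anything — but as in Lemma \ref{lem:tensorC} it plays no role once the middle vertex is removed. Since Gaussian elimination preserves the bimodule chain homotopy type over the relevant additive category \cite{MR2320156}, this yields the claimed equivalence. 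Note the statement should read ``Let $m\in\Z$'' to match the complex $C^\ast(m)$ appearing in it.
\end{proof}
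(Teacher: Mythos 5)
Your overall strategy is the same as the paper's (expand the square, split the $(R\otimes R)\otimes_R(R\otimes R)$ entries via Lemma \ref{lem:tensorR}, then Gaussian eliminate), but as written the argument has a real gap: one Gaussian elimination cannot leave a two-term complex. By your own appeal to Lemma \ref{lem:tensorR}, $C^1(m)\otimes_R E^0\cong R\otimes R\otimes R\{1\}\cong R\otimes R\{2\}\oplus R\otimes R$ is of rank \emph{two} over $R\otimes R$, not the single summand ``$C^1(m)\otimes_R E^0\cong R\otimes R\{1\}$'' you keep at the end, and the degree-two entry $C^1(m)\otimes_R E^1\cong R\otimes R\{2\}$ is nonzero and untouched by your elimination. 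So after cancelling the $1\otimes1\otimes1$-summand of $C^0(m)\otimes_R E^0$ against $C^0(m)\otimes_R E^1$ you still have summands of ranks $1,2,1$ in degrees $0,1,2$. You need a second elimination: the component of $-1\otimes\mu$ on the $1\otimes1\otimes1$-summand of $C^1(m)\otimes_R E^0$ is (minus) an isomorphism onto $C^1(m)\otimes_R E^1$, and cancelling that pair kills degree $2$ and the extra degree-$1$ summand. This is exactly the elimination the paper performs (it does that one first and yours second; the order is immaterial).

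Your zig-zag is also mis-assembled, and the resulting sign is wrong in a way that changes the answer. The composite $(1\otimes\mu)^{-1}\circ(\delta_m\otimes 1)$ does not typecheck, and $-1\otimes\mu$ (a degree $1\to2$ map) plays no role in correcting the degree $0\to1$ differential. With $A\cong R\otimes R\{-2\}$ the surviving $1\otimes X\otimes 1$-summand, the correction is $-\beta\phi^{-1}\gamma$ where $\gamma=1\otimes X\colon A\to C^0(m)\otimes_R E^1$, $\phi=1\otimes 1$ on the cancelled summand, and $\beta=-1\otimes 1$ is the component of $\delta_m\otimes 1$ from the cancelled summand into the surviving $R\otimes R$ summand of $C^1(m)\otimes_R E^0$; this gives $mX\otimes 1+1\otimes X=-\delta_{-m}$, so after negating one generator the surviving complex is $C^\ast(-m)\{-1\}$, with the survivors sitting in quantum shifts $\{-2\}$ and $\{0\}$ (not $\{-1\}$ and $\{1\}$ as you state). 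Your claimed differential $-mX\otimes 1+1\otimes X$ and the identity ``$\delta_{-m}=-\delta_m$'' are both false (indeed $\delta_{-m}(1\otimes 1)=-mX\otimes 1-1\otimes X$, whereas $-\delta_m(1\otimes1)=-mX\otimes1+1\otimes X$, and $C^\ast(m)\not\cong C^\ast(-m)$ as bimodule complexes); taken at face value your computation would produce an unshifted $C^\ast(m)$ rather than $C^\ast(-m)\{-1\}$, so the sign and grading bookkeeping you flag as the ``main obstacle'' is precisely where the proof, as proposed, fails.
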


\begin{proof}
Using Lemma \ref{lem:tensorR} we can write $C^\ast(n)\otimes_R E^\ast$ as
\[
\begin{tikzpicture}
\node at (0,2.5) {$R\otimes R$};
\node at (0,1.5) {$R\otimes R\{-2\}$};
\node at (4,2.5) {$R\otimes R \{2\}$};
\node at (4,1.5) {$R\otimes R$};
\node at (4,0) {$R\otimes R$};
\node at (8,0) {$R\otimes R\{2\}$};
\draw[->] (0.7,2.5) -- node [above,scale=0.7] {$nX\otimes 1$} (3.2,2.5);
\draw[->] (0.7,2.4) -- node [above,sloped,scale=0.7] {$-1\otimes 1$} (3.4,1.6);
\draw[->] (0.7,2.3) -- node [above,sloped,near end,scale=0.7] {$1\otimes 1$} (3.4,0.1);
\draw[-,line width=6pt,draw=white] (0.8,1.5) -- (3,1.5);
\draw[->] (1,1.5) -- node [above,scale=0.7] {$nX\otimes 1$} (3.4,1.5);
\draw[->] (1,1.4) -- node [below,sloped,scale=0.7] {$1\otimes X$} (3.4,0);
\draw[->] (4.8,2.5) -- node [above,sloped,scale=0.7] {$-1\otimes 1$} (7.2,0.2);
\draw[->] (4.6,1.5) -- node [above,sloped,scale=0.7] {$-1\otimes X$} (7.2,0.1);
\draw[->] (4.6,0) -- node [above,scale=0.7] {$\delta_n$} (7.2,0);
\end{tikzpicture}
\]
We can use Gaussian elimination on the morphism $-1\otimes 1$ between the $R\otimes R\{2\}$, and after that, we use Gaussian elimination on the $1\otimes 1$ morphism between the $R\otimes R$ direct summands in homological degrees $0$ and $1$.

This leads to
\[
\begin{tikzpicture}
\node at (0,0) {$R\otimes R\{-2\}$};
\node at (4,0) {$R\otimes R$};
\draw[->] (1,0) -- node [above,scale=0.7] {$nX\otimes 1+1\otimes X$} (3.4,0);
\end{tikzpicture}
\]
which implies the statement.
\end{proof}

In view of Lemma \ref{lem:tensorC} and Lemma \ref{lem:tensorCD} we would like to find a knot $K$ with $D^\ast(m)$ as a direct summand in the chain homotopy type of its Khovanov complex, and a two component link $L$ with $C^\ast(n)$ as a direct summand in the chain homotopy type of its Khovanov complex. As we shall see, the trefoil knot works with $m=2$, but to get an appropriate $L$ we need to simplify the algebra.

\section{A recap of Bar-Natan's algorithm}
In \cite{MR2174270} Bar-Natan gave a new introduction to Khovanov cohomology based on tangles and cobordisms. Furthermore, in \cite{MR2320156} he used this to obtain a fast algorithm to calculate it. We quickly recall his construction, and show how it can be used to keep the information coming from the action of $R$.

Given a finite subset $B\subset S^1$, let $\Cobd(B)$ be the category whose objects are smooth compact submanifolds $S\subset D^2$ with $\partial S = B$, and whose morphisms are ``dotted'' cobordisms embedded in a cylinder $D^2\times [0,1]$, up to boundary preserving isotopy. Here ``dotted'' means that we allow finitely many points in the interior of a cobordism, which are allowed to move freely.

Now define $\Cob(B)$ to be the pre-additive category with the same objects as $\Cobd(B)$, and where the morphism groups are obtained by taking the free abelian group of the morphisms from $\Cobd(B)$, and adding the local relations
\[
\begin{tikzpicture}[baseline={([yshift=-.5ex]current bounding box.center)}]
\sphere{0}{0}{0.5}
\node at (1,0) {$=0$,};
\spheredot{3}{0}{0.5}
\node at (4,0) {$=1$,};
\plane{5}{-0.5}
\node at (5.5,0.2) [scale = 0.75] {$\bullet$};
\node at (5.5,-0.2) [scale = 0.75] {$\bullet$};
\node at (6.5,0) {$=0$,};
\end{tikzpicture}
\]
and
\[
\begin{tikzpicture}[baseline={([yshift=-.5ex]current bounding box.center)}]
\cylinder{2.2}{0}{1}{2}
\node at (3.7,1) {$=$};
\bowldot{4.4}{2}{1}
\bowlud{4.4}{0}{1}
\node at (5.9,1) {$+$};
\bowl{6.6}{2}{1}
\bowluddot{6.6}{0}{1}
\node at (8,0.8) {.};
\end{tikzpicture}
\]
We can turn this category into an additive category by formally adding direct sums as in \cite{MR2174270}. We then let $\mathfrak{K}(\Cob(B))$ be the category of cochain complexes over the additive category.

Given a tangle $T$, \cite{MR2174270} then assigns a cochain complex $\CK^\ast(T)$ as an object in $\mathfrak{K}(\Cob(\partial T))$. The algorithm to calculate Khovanov cohomology described in \cite{MR2320156} can now be summarized as follows. We refer to the original publication for more details.
\begin{enumerate}
\item Consider the tangle $T$ as a sequence of tangles $T_1,\ldots,T_k$, with each subtangle $T_i$ consisting of one crossing. Form $\CK^\ast(T_1)$.
\item (Tensor product) Assuming we have a chain complex $C^\ast$ representing the chain homotopy type of the tangle $T_1\cdots T_{i-1}$ for some $i\geq 2$, form the tensor product $C^\ast \otimes \CK^\ast(T_i)$. To get the new objects in this tensor product, we need to combine the boundaries of the $1$-dimensional manifolds according to the gluings from the tangles.
\item (Delooping \cite[Lem.3.1]{MR2320156}) In this new cochain complex some of the generators will have circle components in the representing $1$-manifold. Such a generator can be replaced by two generators without the circle. Repeat until all circles are gone.
\item (Gaussian elimination \cite[Lem.3.2]{MR2320156}) The delooped complex may have several direct summands, on which Gaussian elimination can be performed. This is repeated until no further eliminations are possible. The resulting cochain complex $C^\ast$ has the chain homotopy type of $\CK^\ast(T_1\cdots T_i)$.
\item Continue steps (2)-(4) until the last tangle, after which we have a cochain complex $C^\ast$ chain homotopy equivalent to $\CK^\ast(T)$.
\end{enumerate}
If we start with a link diagram $L$, the final result is a cochain complex over $\Cob(\emptyset)$, and all generators have the empty set as their object. The cobordisms can be reduced to the empty set using the relations, and the information boils down to a cochain complex over $\Z$.

In view of Section \ref{sec:connectedsum} we would like to get a cochain complex over $R$ or $R\otimes R$. Now if we choose a basepoint on a tangle $T$, we can get a cochain map $X^\ast\colon \CK^\ast(T)\to \CK^\ast(T)$ by putting a dot on the component of the cylinder corresponding to the basepoint.

For the algorithm, we only need one of the tangles to have the basepoint, and this will give a cochain map $X^\ast$ on the final cochain complex $C^\ast$. In the case of a link diagram, this turns $C^\ast$ into a left $R$-module complex. Similarly, with two basepoints we can get $C^\ast$ to be a $R-R$ bimodule complex. 

\begin{example}
(Compare \cite[\S 6.2]{MR1740682}) Consider the trefoil knot $T(2,3)$ obtained from the braid word $\sigma_1^3$. The cochain complex $\CK^\ast(\sigma_1\sigma_1) = \CK^\ast(\sigma_1)\otimes \CK^\ast(\sigma_1)$ is given by
\[
\begin{tikzpicture}
\draw (0,2) to [out=90, in=270] (0.1,2.25) to [out=90, in=270] (0,2.5) to [out=90, in=270] (0.1,2.75) to [out=90, in=270] (0,3);
\draw (0.6,2) to [out=90, in=270] (0.5,2.25) to [out=90,in=270] (0.6,2.5) to [out=90,in=270] (0.5,2.75) to [out=90, in=270] (0.6,3);
\draw (3,3) to [out=90, in = 180] (3.3,3.15) to [out=0, in=90] (3.6,3);
\draw (3,4) to [out=270, in=90] (3.1,3.75) to [out=270,in=90] (3,3.5) to [out=270,in=180] (3.3,3.35) to [out=0, in =270] (3.6,3.5) to [out=90, in=270] (3.5,3.75) to [out=90, in=270] (3.6,4);
\draw (3,1) to [out=90, in=270] (3.1,1.25) to [out=90, in=270] (3,1.5) to [out=90, in=180] (3.3,1.65) to [out=0, in=90] (3.6,1.5) to [out=270,in=90] (3.5,1.25) to [out=270,in=90] (3.6,1);
\draw (3,2) to [out=270,in=180] (3.3,1.85) to [out=0, in=270] (3.6,2);
\draw (6,2) to [out=90,in=180] (6.3,2.15) to [out=0, in=90] (6.6,2);
\draw (6,2.5) to [out=90,in=180] (6.3,2.65) to [out=0, in=90] (6.6,2.5);
\draw (6,2.5) to [out=270,in=180] (6.3,2.35) to [out=0,in=270] (6.6,2.5);
\draw (6,3) to [out=270,in=180] (6.3,2.85) to [out=0,in=270] (6.6,3);
\draw[->] (1,2.6) -- node [above,sloped] {$S$} (2.6,3.5);
\draw[->] (1,2.4) -- node [above,sloped] {$S$} (2.6,1.5);
\draw[->] (4,3.5) -- node [above,sloped] {$S$} (5.6,2.6);
\draw[->] (4,1.5) -- node [above,sloped] {$-S$} (5.6,2.4);
\node at (4,3) {$\{1\}$};
\node at (4,1) {$\{1\}$};
\node at (7,2) {$\{2\}$};
\end{tikzpicture}
\]
where $S$ denotes the obvious surgery cobordism. After delooping this turns into
\[
\begin{tikzpicture}
\draw (0,2) to [out=90, in=270] (0.1,2.25) to [out=90, in=270] (0,2.5);
\draw (0.6,2) to [out=90, in=270] (0.5,2.25) to [out=90,in=270] (0.6,2.5);
\draw (3,3) to [out=90, in=180] (3.3,3.15) to [out=0, in=90] (3.6,3);
\draw (3,3.5) to [out=270,in=180] (3.3,3.35) to [out=0, in=270] (3.6,3.5);
\draw (3,1) to [out=90, in=180] (3.3,1.15) to [out=0, in=90] (3.6,1);
\draw (3,1.5) to [out=270,in=180] (3.3,1.35) to [out=0, in=270] (3.6,1.5);
\draw (6,3) to [out=90, in=180] (6.3,3.15) to [out=0, in=90] (6.6,3);
\draw (6,3.5) to [out=270,in=180] (6.3,3.35) to [out=0, in=270] (6.6,3.5);
\draw (6,1) to [out=90, in=180] (6.3,1.15) to [out=0, in=90] (6.6,1);
\draw (6,1.5) to [out=270,in=180] (6.3,1.35) to [out=0, in=270] (6.6,1.5);
\draw[->] (1,2.35) -- node [above,sloped] {$S$} (2.6,3.25);
\draw[->] (1,2.15) -- node [above,sloped] {$S$} (2.6,1.25);
\draw[->] (4,3.25) -- (5.6,3.25);
\draw[->] (4.4,2.8) -- node [above,sloped,near end] {$1$} (5.6,1.35);
\draw[-,line width=6pt,draw=white] (4,1.35) -- (5.6,3.15);
\draw[->] (4,1.35) -- (5.6,3.15);
\draw[->] (4,1.25) -- node [above] {$-1$} (5.6,1.25);
\node at (4,3) {$\{1\}$};
\node at (4,1) {$\{1\}$};
\node at (7,3) {$\{3\}$};
\node at (7,1) {$\{1\}$};
\draw (4.7,3.35) to [out=90,in=180] (4.8,3.4) to [out=0,in=90] (4.9,3.35);
\draw (4.7,3.55) to [out=270,in=180] (4.8,3.5) to [out=0,in=270] (4.9,3.55);
\node[scale=0.6] at (4.8,3.5) {$\bullet$};
\draw (4.3,2.05) to [out=90,in=180] (4.4,2.1) to [out=0,in=90] (4.5,2.05);
\draw (4.3,2.25) to [out=270,in=180] (4.4,2.2) to [out=0,in=270] (4.5,2.25);
\node[scale=0.6] at (4.4,2.1) {$\bullet$};
\node at (4.1,2.15) {$-$};
\end{tikzpicture}
\]
We can perform one Gaussian elimination to get
\[
\begin{tikzpicture}
\draw (0,2) to [out=90, in=270] (0.1,2.25) to [out=90, in=270] (0,2.5);
\draw (0.6,2) to [out=90, in=270] (0.5,2.25) to [out=90,in=270] (0.6,2.5);
\draw (3,2) to [out=90, in=180] (3.3,2.15) to [out=0, in=90] (3.6,2);
\draw (3,2.5) to [out=270,in=180] (3.3,2.35) to [out=0, in=270] (3.6,2.5);
\draw (6,2) to [out=90, in=180] (6.3,2.15) to [out=0, in=90] (6.6,2);
\draw (6,2.5) to [out=270,in=180] (6.3,2.35) to [out=0, in=270] (6.6,2.5);
\draw[->] (1,2.25) -- node [above] {$S$} (2.6,2.25);
\draw[->] (4,2.25) -- node [above] {$-$} (5.6,2.25);
\draw (4.4,2.4) to [out=90,in=180] (4.5,2.45) to [out=0,in=90] (4.6,2.4);
\draw (4.4,2.6) to [out=270,in=180] (4.5,2.55) to [out=0,in=270] (4.6,2.6);
\draw (5,2.4) to [out=90,in=180] (5.1,2.45) to [out=0,in=90] (5.2,2.4);
\draw (5,2.6) to [out=270,in=180] (5.1,2.55) to [out=0,in=270] (5.2,2.6);
\node[scale=0.6] at (4.5,2.55) {$\bullet$};
\node[scale=0.6] at (5.1,2.45) {$\bullet$};
\node at (4,2) {$\{1\}$};
\node at (7,2) {$\{3\}$};
\end{tikzpicture}
\]
Applying the algorithm to the next crossing, without closing the braid yet, is easily seen to lead to
\[
\begin{tikzpicture}
\draw (0,2) to [out=90, in=270] (0.1,2.25) to [out=90, in=270] (0,2.5);
\draw (0.6,2) to [out=90, in=270] (0.5,2.25) to [out=90,in=270] (0.6,2.5);
\draw (3,2) to [out=90, in=180] (3.3,2.15) to [out=0, in=90] (3.6,2);
\draw (3,2.5) to [out=270,in=180] (3.3,2.35) to [out=0, in=270] (3.6,2.5);
\draw (6,2) to [out=90, in=180] (6.3,2.15) to [out=0, in=90] (6.6,2);
\draw (6,2.5) to [out=270,in=180] (6.3,2.35) to [out=0, in=270] (6.6,2.5);
\draw (9,2) to [out=90, in=180] (9.3,2.15) to [out=0, in=90] (9.6,2);
\draw (9,2.5) to [out=270,in=180] (9.3,2.35) to [out=0, in=270] (9.6,2.5);
\draw[->] (1,2.25) -- node [above] {$S$} (2.6,2.25);
\draw[->] (4,2.25) -- node [above] {$-$} (5.6,2.25);
\draw[->] (7,2.25) -- node [above] {$+$} (8.6,2.25);
\draw (4.4,2.4) to [out=90,in=180] (4.5,2.45) to [out=0,in=90] (4.6,2.4);
\draw (4.4,2.6) to [out=270,in=180] (4.5,2.55) to [out=0,in=270] (4.6,2.6);
\draw (5,2.4) to [out=90,in=180] (5.1,2.45) to [out=0,in=90] (5.2,2.4);
\draw (5,2.6) to [out=270,in=180] (5.1,2.55) to [out=0,in=270] (5.2,2.6);
\node[scale=0.6] at (4.5,2.55) {$\bullet$};
\node[scale=0.6] at (5.1,2.45) {$\bullet$};
\draw (7.4,2.4) to [out=90,in=180] (7.5,2.45) to [out=0,in=90] (7.6,2.4);
\draw (7.4,2.6) to [out=270,in=180] (7.5,2.55) to [out=0,in=270] (7.6,2.6);
\draw (8,2.4) to [out=90,in=180] (8.1,2.45) to [out=0,in=90] (8.2,2.4);
\draw (8,2.6) to [out=270,in=180] (8.1,2.55) to [out=0,in=270] (8.2,2.6);
\node[scale=0.6] at (7.5,2.55) {$\bullet$};
\node[scale=0.6] at (8.1,2.45) {$\bullet$};
\node at (4,2) {$\{1\}$};
\node at (7,2) {$\{3\}$};
\node at (10,2) {$\{5\}$};
\end{tikzpicture}
\]
If we now close the strands of the braid, and consider each circle to give rise to a factor $R$, the resulting cochain complex is
\[
\begin{tikzpicture}
\node at (0,0) {$R\otimes R$};
\node at (2.5,0) {$R\{1\}$};
\node at (5,0) {$R\{3\}$};
\node at (7.5,0) {$R\{5\}$};
\draw[->] (0.7,0) -- node [above] {$\mu$} (2,0);
\draw[->] (3,0) -- node [above] {$0$} (4.5,0);
\draw[->] (5.5,0) -- node [above] {$2X$} (7,0);
\end{tikzpicture}
\]
If we treat the left strand as the based strand, the $0$-th cochain group $R\otimes R$ has the left $R$-module structure involving the first factor of $R$, which is isomorphic to $R\{-1\}\oplus R\{1\}$ as a left $R$-module. In particular, we can perform one more Gaussian elimination to get
\begin{equation}\label{eq:T23complex}
 \CK^\ast(T(2,3)) \simeq R^\ast\{3\} \oplus D^\ast(2)[2]\{7\}
\end{equation}
as left $R$-complexes. Here $R^\ast$ is the trivial left $R$-complex concentrated in homological degree $0$ given by $R^0 = R\{-1\}$. The shift by $+3$ in the quantum grading is coming from the three positive crossings in $T(2,3)$.
\end{example}

We notice that we can wait until the end before we commit to the basepoint. This also works if we have two basepoints that we can put on the last tangle. Another advantage of this is that at the last step, $B$ only has four points. After delooping, there are only two objects, and as morphisms (up to the local relations) we only have a surgery $S$, a surgery together with a dotting, which we denote by $\dot{S}$, and the various dottings on a cylinder (including no dottings at all).

\section{The Khovanov cochain complex for $L_n$}
We would like to get that $\CK^\ast(L_3)$ contains, up to chain homotopy, a direct summand complex $C^\ast(3)$, suitably shifted. This does not seem to be quite the case. However, it turns out that after inverting $2$ the Khovanov cochain complex becomes more amenable and we do get our direct summand.

Let $S$ be a subring of $\Q$ containing $1$. For any link diagram $L$ we write $\CK^\ast(L;S) = \CK^\ast(L)\otimes S$. We can then consider $\CK^\ast(L_3;S)$ as an $R_S - R_S$ bimodule complex, where $R_S = S[X]/\langle X^2\rangle$. We are mainly interested in $S=\Z_{(p)}$, the integers localized at a prime $p$, in which case we simply write $R_p = R_{\Z_{(p)}}$.

We also use the notations $C^\ast(n;R_S) = C^\ast(n)\otimes S$ and $D^\ast(m;R_S) = D^\ast(m)\otimes S$.

\begin{lemma}\label{lem:mainlemma}
There exists a finitely generated $\R - \R$ bimodule complex $F_3^\ast$ concentrated in homological degrees $0$ to $7$ such that
\[
 \CK^\ast(L_3;\Z_{(3)}) \simeq F_3^\ast \oplus C^\ast(3;\R)[8]\{25\}
\]
as $\R - \R$ bimodule complexes.
\end{lemma}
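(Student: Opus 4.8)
The plan is to run Bar-Natan's algorithm (steps (1)--(5) of Section 3) on the braid word $(\sigma_1\sigma_2\sigma_3)^4\sigma_1\sigma_2$ that defines $L_3$, working over $\R=\Z_{(3)}[X]/\langle X^2\rangle$ and keeping track of the $\R-\R$ bimodule structure by reserving two basepoints for the very last tangle, one on each component. Concretely, I would process the crossings one at a time: after each crossing take the tensor product with $\CK^\ast$ of the new crossing, deloop every resulting circle using $R\otimes R\cong R\{1\}\oplus R\{-1\}$ as in Lemma~\ref{lem:tensorR}, and then perform all available Gaussian eliminations \cite[Lem.3.2]{MR2320156}. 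Because we have inverted $2$, many coboundary matrix entries that are units over $\Z_{(3)}$ (such as $\pm2$, arising from the first coboundary of a trefoil-type subcomplex or from composites like $nX\otimes1 + 1\otimes X$ paired against itself) become invertible, so extra eliminations are possible that are not available over $\Z$. After closing up the braid, each of the (finitely many) residual circles contributes a factor of $\R$, and the two chosen basepoints give the left and right $\R$-actions; the output is a finitely generated $\R-\R$ bimodule complex concentrated in homological degrees $0$ through $7$ (the number of crossings, since $L_3$ has $14$ crossings but the first Reidemeister-free braid closure collapses the homological range appropriately — more precisely, one tracks that the braid word has $14$ letters and the complex lives in degrees $0$ to $14$, then the shift $[8]$ in the statement records the normalization; I would state the homological range that actually comes out and match it to the claimed degrees $0$ to $7$ after accounting for the negative crossings, here there are none, so the honest range is $0$ to $14$ and the lemma's "concentrated in $0$ to $7$" refers to $F_3^\ast$ only).

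The key structural observation to extract is that, somewhere near the top homological degree, the simplified complex contains a direct summand that is exactly of the shape $C^0(3;\R)=\R\otimes\R\{-1\}\xrightarrow{\ \delta_3\ }\R\otimes\R\{1\}=C^1(3;\R)$ with $\delta_3(1\otimes1)=3X\otimes1 - 1\otimes X$, suitably shifted in homological degree (by $[8]$) and quantum degree (by $\{25\}$, which bookkeeps the $+12$ from the twelve... — in fact all fourteen — positive crossings together with the internal grading shifts accumulated through the delooping steps). The way this $C^\ast(3)$ arises in practice is as a "zig-zag" coboundary produced by Gaussian elimination, exactly as in the proof of Lemma~\ref{lem:tensorC}: a composite of the form $-(m\otimes1)\circ(1\otimes1)^{-1}\circ(nX\otimes1)$ generically contributes an $X\otimes1$ term with an integer coefficient equal to a product of the local multiplicities, and running the $(\sigma_1\sigma_2\sigma_3)^4$ part of the braid multiplies these up to a factor of $3$ (this is the representation-theoretic shadow of why $n=3$ and not some other number appears — it is the same mechanism that makes $T(3,4)$ carry $3$-torsion). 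Everything else in the complex is collected into $F_3^\ast$, about which we claim nothing beyond finite generation and the homological range.

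The main obstacle is genuinely computational bookkeeping: there is no shortcut around carrying out the delooping-and-elimination algorithm far enough to see the $C^\ast(3;\R)$ summand split off cleanly, and one must verify that the splitting is a direct-sum splitting of $\R-\R$ bimodule complexes (not merely of abelian-group complexes), which requires checking that the eliminations used are along bimodule isomorphisms — this is where inverting $2$ is essential, since over $\Z$ the relevant entry is $2$ and fails to be a unit, leaving the complex in a tangled state from which $C^\ast(3)$ does not visibly split. I would organize the computation to stop as soon as a $C^\ast(3;\R)$ summand (with the right shifts) is isolated, defining $F_3^\ast$ to be literally "the rest", so that the lemma reduces to exhibiting one explicit Gaussian-elimination splitting rather than fully simplifying $\CK^\ast(L_3;\Z_{(3)})$; the grading shifts $[8]\{25\}$ are then read off at the end by comparing the homological and quantum degree of the surviving $C^0(3;\R)$ generator against the global normalization of $\CK^\ast$. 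The only non-routine point worth flagging in the write-up is the claim that the coefficient of the $X\otimes 1$ zig-zag term equals exactly $3$ (and not $\pm 3$ times a unit that could be absorbed, nor a different prime power) — this should be argued by tracking the multiplication/comultiplication maps $\mu,\Delta$ through the four repetitions of $\sigma_1\sigma_2\sigma_3$, and is the heart of why the lemma is true.
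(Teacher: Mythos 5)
Your overall strategy coincides with the paper's: scan the braid word with Bar-Natan's algorithm, delay the choice of the two basepoints until the end so that the $\R-\R$ bimodule structure only appears at closure, invert $2$ so that extra Gaussian eliminations become available, and peel off a $C^\ast(3;\R)$ summand at the top of the complex, calling the rest $F_3^\ast$. But as written there is a genuine gap: everything that makes the lemma true is deferred to ``carry out the computation'', and the one place where you do commit to a mechanism --- the claim that the coefficient $3$ appears as an accumulated zig-zag multiplicity, a product of local multiplicities built up by the four repetitions of $\sigma_1\sigma_2\sigma_3$ --- is not what actually happens. In the simplified complex (which is already concentrated in homological degrees $0$ to $9$ before closure, not $0$ to $14$; the paper records its terminal segment explicitly, computer-assisted, in (\ref{eq:l3ending}) and Appendix \ref{app:handson}), the final differentials over $\Z$ involve a map $2X\otimes 1$ together with maps built from $\mu$ and $\Delta$, and no coefficient $3$ is visible at all. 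The $3$ only materializes after inverting $2$: cancelling the unit $2$ between the $\R\{23\}$ summands in degrees $7$ and $8$, and then a second unit, produces the zig-zag
\[
 2X\otimes 1 - \Delta\circ\tfrac{1}{2}\circ\mu = \tfrac{1}{2}\bigl(3X\otimes 1 - 1\otimes X\bigr) = \tfrac{1}{2}\,\delta_3 ,
\]
so the prime $3$ arises as $2-\tfrac{1}{2}$ up to the unit $\tfrac{1}{2}$, not from multiplicities compounding along the braid; your plan to justify ``exactly $3$'' by tracking $\mu,\Delta$ through the four repetitions would not find it in the form you describe.

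A second point the proposal passes over: isolating $C^\ast(3;\R)[8]\{25\}$ as a direct summand is not automatic once the right two generators are visible at the top. One must also verify that, after the eliminations, the induced differentials from homological degree $7$ into the surviving degree-$8$ summands vanish, so that degrees $8$ and $9$ really split off from $F_3^\ast$. In the paper this is exactly the pair of zig-zag computations $\Delta\circ\mu - (-2\Delta)\circ\tfrac{1}{2}\circ(-\mu)=0$ and $-X\mu - (2X)\circ\tfrac{1}{2}\circ(-\mu)=0$, and it is precisely here (and in cancelling the $2$'s) that inverting $2$ is used --- consistent with your remark that over $\Z$ the summand does not visibly split. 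Without exhibiting the terminal segment of the simplified complex and performing these vanishing checks, what you have is the correct plan, essentially the paper's, but not yet a proof.
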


\begin{proof}
We apply Bar-Natan's algorithm on $T(4,5)$ by scanning the crossings according to the braid word $(\sigma_1\sigma_2\sigma_3)^5$. After $14$ crossings we get a cochain complex $C^\ast$ in $\mathfrak{K}(\Cob(B))$, where $B$ consists of four points, which is concentrated in homological degrees $0$ to $9$ and which ends in
\begin{equation}\label{eq:l3ending}
\begin{tikzpicture}[baseline={([yshift=-.5ex]current bounding box.center)}]
\node at (1,0) {$\{8\}$};
\node at (1,1.5) {$\{9\}$};
\node at (5,0) {$\{10\}$};
\node at (5,1.5) {$\{11\}$};
\node at (5,3) {$\{9\}$};
\node at (9,0) {$\{12\}$};
\node at (9,1.5) {$\{11\}$};
\smoothingud{0.1}{-0.25}{0.5}
\smoothinglr{0.1}{1.25}{0.5}
\smoothingud{4}{-0.25}{0.5}
\smoothinglr{4}{1.25}{0.5}
\smoothinglr{4.1}{2.75}{0.5}
\smoothingud{8}{-0.25}{0.5}
\smoothinglr{8}{1.25}{0.5}
\draw[->] (1.4,0.2) -- node [above,sloped,very near start,scale=0.8] {$-S$} (3.8,2.9);
\draw[->] (1.4,1.6) -- node [above,sloped,scale=0.8] {$2$} (3.8,3);
\draw[-,line width=6pt,draw=white] (1.4,1.5) -- (3.8,1.5);
\draw[-,line width = 12pt, draw=white] (2.4,1.8) -- (3.4,1.8);
\draw[-,line width=6pt,draw=white] (1.4,1.4) -- (3.8,0.1);
\draw[->] (1.4,1.4) -- node [above,sloped,near end,scale=0.8] {$-2S$} (3.8,0.1);
\draw[-,line width=6pt,draw=white] (1.4,0.1) -- (3.8,1.4);
\draw[->] (1.4,0.1) -- node [above,sloped,near end,scale=0.8] {$-\dot{S}$} (3.8,1.4);
\draw[->] (1.4,1.5) -- node [above] {$+$} (3.8,1.5);
\draw[->] (1.4,0) -- node [above] {$+$} (3.8,0);
\smoothingud{2.1}{0.1}{0.3}
\smoothingud{2.8}{0.1}{0.3}
\smoothinglr{2.1}{1.6}{0.3}
\smoothinglr{2.8}{1.6}{0.3}
\node[scale=0.8] at (2.15,0.25) {$\bullet$};
\node[scale=0.8] at (3.05,0.25) {$\bullet$};
\node[scale=0.8] at (2.25,1.825) {$\bullet$};
\node[scale=0.8] at (2.95,1.65) {$\bullet$};
\node[scale=0.8] at (6.56,0.26) {$\bullet$};
\node[scale=0.8] at (6.35,0.25) {$2$};
\draw[->] (5.4,3) -- node [above,sloped,scale=0.8,near start] {$\dot{S}$} (7.8,0.2);
\draw[-,line width=6pt,draw=white] (5.5,1.5) -- (7.8,1.5);
\draw[-,line width=6pt,draw=white] (5.5,0.1) -- (7.8,1.4);
\draw[->] (5.5,0.1) -- node [above,sloped,near start,scale=0.8] {$S$} (7.8,1.4);
\draw[->] (5.5,0) -- (7.8,0);
\draw[-,line width=6pt,draw=white] (6.05,1.075) -- (7.225,0.425);
\draw[->] (5.5,1.5) -- node [above,scale=0.8,near start] {$2$} (7.8,1.5);
\draw[->] (5.5,1.4) -- node [above,sloped,near start,scale=0.8] {$S$} (7.8,0.1);
\smoothingud{6.5}{0.1}{0.3}
\end{tikzpicture}
\end{equation}
We note that this was obtained with the assistance of a computer, and in Appendix \ref{app:handson} we show several stages in the algorithm.

We can treat this as the Khovanov complex for $L_3$ by connecting the two endpoints on the left, and connecting the two endpoints on the right. The resulting $R-R$ bimodule complex chain homotopy equivalent to $\CK^\ast(L_3)$ ends then in
\[
\begin{tikzpicture}
\node at (0,0) {$R\otimes R\{22\}$};
\node at (0,1.5) {$R\{23\}$};
\node at (5,0) {$R\otimes R\{24\}$};
\node at (5,1.5) {$R\{25\}$};
\node at (5,3) {$R\{23\}$};
\node at (10,0) {$R\otimes R\{26\}$};
\node at (10,1.5) {$R\{25\}$};
\draw[->] (1,0.2) -- node [above,sloped,near end,scale=0.8] {$-\mu$} (4.4,2.9);
\draw[->] (0.6,1.6) -- node [above,sloped,scale=0.8] {$2$} (4.4,3);
\draw[-,line width=6pt,draw=white] (0.6,1.5) -- (4.4,1.5);
\draw[-,line width=6pt,draw=white] (0.6,1.4) -- (4,0.1);
\draw[->] (0.6,1.4) -- node [above,sloped,scale=0.8,near end] {$-2\Delta$} (4,0.1);
\draw[-,line width=6pt,draw=white] (1,0.1) -- (4.4,1.4);
\draw[->] (1,0.1) -- node [above,sloped,scale=0.8,near end] {$-X\mu$} (4.4,1.4);
\draw[->] (0.6,1.5) -- node [above,scale=0.8,near end] {$2X$} (4.4,1.5);
\draw[->] (1,0) -- node [above,scale=0.8] {$\Delta\circ\mu$} (4,0);
\draw[->] (5.6,3) -- node [above,sloped,scale=0.8, near start] {$\Delta X$} (9,0.2);
\draw[-,line width=6pt,draw=white] (5.6,1.5) -- (9.4,1.5);
\draw[-,line width=6pt,draw=white] (6,0.1) -- (9.4,1.4);
\draw[->] (6,0.1) -- node [above,sloped,scale=0.8,near start] {$\mu$} (9.4,1.4);
\draw[-,line width=6pt,draw=white] (6.7,1.0) -- (7.9,0.525);
\draw[->] (5.6,1.4) -- node [above,sloped,scale=0.8,near start] {$\Delta$} (9,0.1);
\draw[->] (5.6,1.5) -- node [above,scale=0.8,near start] {$2$} (9.4,1.5);
\draw[->] (6,0) -- node [above,scale=0.8] {$2X\otimes 1$} (9,0);
\end{tikzpicture}
\]
The shift in the quantum grading comes from the $14$ positive crossings in $L_3$.

So far we have worked over the integers, but it is not clear whether we can improve this cochain complex significantly as an $R-R$ bimodule complex. But if we allow ourselves to invert $2$, there are two Gaussian eliminations that we can perform.

So we now switch to $\R$ and cancel the $\R\{23\}$ direct summands in homological degrees $7$ and $8$. The resulting morphism starting in $\R\otimes \R\{22\}$ and ending in $\R\otimes \R\{24\}$ is given by
\[
 \Delta\circ \mu - (-2\Delta)\circ \frac{1}{2} \circ (-\mu) = 0,
\]
and the morphism starting in $\R\otimes \R\{22\}$ and ending in $\R\{25\}$ is given by
\[
 -X\mu - (2X) \circ \frac{1}{2} \circ (-\mu) = 0.
\]
In particular, homological degrees $8$ and $9$ now form a direct summand cochain subcomplex. We can perform one more Gaussian elimination, after which the morphism between the remaining summands is given by
\begin{align*}
2X\otimes 1 & - \Delta \circ \frac{1}{2} \circ \mu = 2X\otimes 1 - \frac{1}{2}(X\otimes 1+1\otimes X) \\
& = \frac{1}{2} (3X\otimes 1 - 1 \otimes X)  = \frac{1}{2} \delta_3.
\end{align*}
As $\frac{1}{2}$ is a unit in $\R$ the result follows by a change of basis.
\end{proof}

For the next result we can work over $R$ again. Recall the $R-R$ bimodule complex $E^\ast$ from Section \ref{sec:connectedsum}.

\begin{lemma}\label{lem:beginner}
Let $n\geq 2$. There exists a finitely generated $R-R$ bimodule complex $G^\ast$ concentrated in homological degrees $2$ to $n(n+2)-1$ such that
\[
 \CK^\ast(L_n) \simeq E^\ast \{n(n+1)\} \oplus G^\ast
\]
as $R-R$ bimodule complexes.
\end{lemma}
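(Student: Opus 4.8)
The plan is to run Bar-Natan's algorithm on the closure of the braid word $w=(\sigma_1\cdots\sigma_n)^{n+1}\sigma_1\cdots\sigma_{n-1}$, scanning the crossings in the order in which they appear, and to keep careful track of what happens in the lowest homological degrees. Since $w$ is a positive braid with $n(n+1)+(n-1)=n(n+2)-1$ crossings, $\CK^\ast(L_n)$ is concentrated in homological degrees $0$ to $n(n+2)-1$, so the assertion is equivalent to showing that, after full reduction, homological degrees $0$ and $1$ together with the differential between them form $E^\ast\{n(n+1)\}$ while the differential from degree $1$ to degree $2$ vanishes; one then defines $G^\ast$ to be the truncation to homological degrees $\ge 2$.

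First I would establish, by induction on the number of processed crossings, a normal form for the reduced complex in homological degrees $\le 1$: the degree-$0$ object is always the oriented resolution (the identity braid tangle), and the degree-$1$ part is a direct sum of tangles obtained from the identity by a single cap-cup, each joined to degree $0$ by the evident saddle cobordisms. The inductive step analyses the effect of tensoring with one more $\CK^\ast(\sigma_i)$, delooping the circles that appear, and performing the ensuing Gaussian eliminations; the point is that these operations leave the degree-$0$ object untouched and only alter the degree-$1$ summands in a controlled way, with summands coming from repeated occurrences of a generator cancelling one another up to the zig-zag maps produced by Gaussian elimination, which must be recorded. This is the step I expect to be the main obstacle: one has to track signs and the maps into homological degree $2$ precisely enough to force the splitting at the end, and it amounts to an $n$-uniform version of the computer-assisted calculation carried out for $n=3$ in Lemma \ref{lem:mainlemma}.

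Next I would close the braid. The oriented resolution closes up to the $(n+1)$-component unlink; with the two basepoints on the unknot and the $T(n,n+1)$ components this is a free $R\otimes R$-module of rank $2^{n-1}$ in homological degree $0$, while the surviving degree-$1$ cap-cup summands close up to modules of the form $R^{\otimes n}$. A final round of Gaussian eliminations, using the (now merge-type) saddle maps in the $0\to 1$ differential, collapses the degree-$0$ module down to a single free summand $(R\otimes R)\{\ast\}$ and the degree-$1$ part down to $R\{\ast\}$, the sole remaining differential being the multiplication $\mu$; simultaneously one checks that the induced differential from degree $1$ to degree $2$ is zero, so that $E^\ast$, appropriately shifted, is a genuine direct summand subcomplex.

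Finally, the $n(n+2)-1$ positive crossings of the diagram produce the usual overall quantum shift, and unwinding it shows that the generator of $E^0$ lands in quantum degree $n(n+1)+2$; this identifies the summand as $E^\ast\{n(n+1)\}$. Taking $G^\ast$ to be the complementary subcomplex, concentrated in homological degrees $2$ to $n(n+2)-1$, completes the argument.
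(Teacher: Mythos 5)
Your normal-form induction for the open braid (degree $0$ the identity tangle, degree $1$ the $n$ one-cap-cup tangles, preserved under tensoring with each new $\CK^\ast(\sigma_i)$ via one delooping and one Gaussian elimination) is exactly the engine of the paper's proof, and you are right to flag the bookkeeping of the zig-zag maps into degree $2$ as the delicate point. The genuine gap is in your endgame, where you close the whole braid only at the very end and claim that a final round of Gaussian eliminations \emph{using only the $0\to 1$ saddle maps} collapses degree $0$ to a single copy of $R\otimes R$ and degree $1$ to a single copy of $R$. This cannot happen: after full closure the degree-$0$ object is the $(n+1)$-component unlink, of $\Z$-rank $2^{n+1}$, and the degree-$1$ part consists of $n$ objects of $\Z$-rank $2^n$ each, while the targets $E^0$ and $E^1$ have ranks $4$ and $2$. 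Every Gaussian elimination between degrees $0$ and $1$ removes summands of equal rank from both, and $n\cdot 2^n-2>2^{n+1}-4$ for every $n\geq 2$ (for $n=3$ you must shed $22$ in degree $1$ but only $12$ in degree $0$). So the surplus degree-$1$ summands can only disappear by cancellation against degree $2$, which your scheme has not prepared for; in particular, at the stage you describe the differential from degree $1$ to degree $2$ is certainly not zero, and the asserted splitting does not follow.

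This is precisely why the paper does not postpone the closure: it closes the two leftmost endpoints along with each of the final letters $\sigma_1,\ldots,\sigma_{n-1}$, so that the degree-$0$ and degree-$1$ objects acquire closed circles, can be delooped, and the surplus degree-$1$ generators are cancelled \emph{upward} against degree-$2$ generators, one per letter, leaving a single object in each of degrees $0$ and $1$. Only then is the vanishing of the residual map $\varepsilon$ into degree $2$ immediate: the reduced $0\to 1$ differential is the surjection $\mu$, and $\varepsilon\circ\mu=0$ forces $\varepsilon=0$. Your proposal asserts this vanishing without argument, and before degree $1$ has been reduced to a single copy of $R$ the surjectivity trick is not available. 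To repair your route you would either have to track enough of homological degree $2$ after full closure to carry out the missing $1\to 2$ cancellations, or adopt the paper's interleaving of partial closures with the last $n-1$ crossings.
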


\begin{proof}
We will only give the proof for $n=3$, the general case is similar. We apply Bar-Natan's scanning algorithm on the generating braid word, but only keep track of homological degrees $0$ and $1$ after each step.

For the first three steps, there are no possibilities to deloop or to use Gaussian elimination. The resulting cochain complex $\CK^\ast(\sigma_1\sigma_2\sigma_3)$ therefore begins with
\begin{equation}\label{eq:genericstart}
\begin{tikzpicture}[baseline={([yshift=-.5ex]current bounding box.center)}]
\fstrand{0}{0}{0.6}{0.6}
\lcapcup{3}{1.4}{0.6}{0.6}
\mcapcup{3}{0}{0.6}{0.6}
\rcapcup{3}{-1.4}{0.6}{0.6}
\draw[->] (1.1,0.4) -- node[above,sloped,scale=0.8] {$S$} (2.7,1.7);
\draw[->] (1.1,0.3) -- node[above,scale=0.8] {$S$} (2.7,0.3);
\draw[->] (1.1,0.2) -- node[above,sloped,scale=0.8] {$S$} (2.7,-1.1);
\draw[->] (4.1,1.7) -- (5,1.7);
\draw[->] (4.1,0.3) -- (5,0.3);
\draw[->] (4.1,-1.1) -- (5,-1.1);
\node at (5.5,0.3) {$\cdots$};
\node at (4.3,0) {$\{1\}$};
\node at (4.3,1.4) {$\{1\}$};
\node at (4.3,-1.4) {$\{1\}$};
\end{tikzpicture}
\end{equation}
We claim that homological degrees $0$ and $1$ remain in that form until we get to the cochain complex for the braid word $(\sigma_1\sigma_2\sigma_3)^4$. This is done by induction. Assume that the cochain complex for a subword of $(\sigma_1\sigma_2\sigma_3)^4$ begins as in (\ref{eq:genericstart}), and we tensor it with $\CK^\ast(\sigma_i)$ for $i\in \{1,2,3\}$. We will assume $i=2$, but the other cases are similar.

We then get
\[
\begin{tikzpicture}
\fstrand{0}{0}{0.6}{0.6}
\lcapcup{3}{1.4}{0.6}{0.3}
\fstrand{3}{1.7}{0.6}{0.3}
\mcapcup{3}{0}{0.6}{0.3}
\fstrand{3}{0.3}{0.6}{0.3}
\rcapcup{3}{-1.4}{0.6}{0.3}
\fstrand{3}{-1.1}{0.6}{0.3}
\fstrand{3}{-2.8}{0.6}{0.3}
\mcapcup{3}{-2.5}{0.6}{0.3}
\lcapcup{7}{0}{0.6}{0.3}
\mcapcup{7}{0.3}{0.6}{0.3}
\mcapcup{7}{-1.4}{0.6}{0.3}
\mcapcup{7}{-1.1}{0.6}{0.3}
\rcapcup{7}{-2.8}{0.6}{0.3}
\mcapcup{7}{-2.5}{0.6}{0.3}
\draw[->] (1.1,0.4) -- node [above,sloped,scale=0.8] {$S$} (2.7,1.7);
\draw[->] (1.1,0.3) -- node [above,scale=0.8] {$S$} (2.7,0.3);
\draw[->] (1.1,0.2) -- node [above,sloped,scale=0.8] {$S$} (2.7,-1.1);
\draw[->] (1.1,0.1) -- node [above,sloped,scale=0.8] {$S$} (2.7,-2.5);
\draw[->] (4.7,1.7) -- (5.8,1.7);
\draw[->] (4.7,0.3) -- (5.8,1.5);
\draw[->] (4.7,-1.1) -- (5.8,1.3);
\draw[->] (4.7,-2.4) -- node [above,sloped,scale=0.8] {$S$} (6.7,0.2);
\draw[->] (4.7,-2.5) -- node [above,sloped,scale=0.8,near end] {$S$} (6.7,-1.2);
\draw[->] (4.7,-2.6) -- node [above,scale=0.8] {$S$} (6.7,-2.6);
\draw[-,line width=6pt,draw=white] (4.7,1.6) -- (6.7,0.3);
\draw[->] (4.7,1.6) -- node [above,sloped,scale=0.8,near end] {$-S$} (6.7,0.3);
\draw[-,line width=6pt,draw=white] (4.7,0.2) -- (6.7,-1.1);
\draw[->] (4.7,0.2) -- node [above,sloped,scale=0.8] {$-S$} (6.7,-1.1);
\draw[-,line width=6pt,draw=white] (4.7,-1.2) -- (6.7,-2.5);
\draw[->] (4.7,-1.2) -- node [above,sloped,scale=0.8,near end] {$-S$} (6.7,-2.5);
\node at (6.3,1.7) {$\cdots$};
\node at (4.3,0) {$\{1\}$};
\node at (4.3,1.4) {$\{1\}$};
\node at (4.3,-1.4) {$\{1\}$};
\node at (4.3,-2.8) {$\{1\}$};
\node at (8.3,0) {$\{2\}$};
\node at (8.3,-1.4) {$\{2\}$};
\node at (8.3,-2.8) {$\{2\}$};
\end{tikzpicture}
\]
One of the new homological degree $2$ generators, in fact, the one corresponding to $i=2$, can be delooped, and the one with the $-1$-shifted quantum degree can then be cancelled with the new homological degree $1$ generator. After this Gaussian elimination, the complex starts again as in (\ref{eq:genericstart}).

This works all the way until we reach $\CK^\ast((\sigma_1\sigma_2\sigma_3)^4)$. When we tensor this with $\CK^\ast(\sigma_1)$, we also close the two leftmost endpoints of the braid. This means, it actually starts with
\[
\begin{tikzpicture}
\fstrand{0}{0}{0.6}{0.6}
\lcapcup{3}{1.4}{0.6}{0.3}
\fstrand{3}{1.7}{0.6}{0.3}
\mcapcup{3}{0}{0.6}{0.3}
\fstrand{3}{0.3}{0.6}{0.3}
\rcapcup{3}{-1.4}{0.6}{0.3}
\fstrand{3}{-1.1}{0.6}{0.3}
\fstrand{3}{-2.8}{0.6}{0.3}
\lcapcup{3}{-2.5}{0.6}{0.3}
\lcapcup{7}{0}{0.6}{0.3}
\lcapcup{7}{0.3}{0.6}{0.3}
\mcapcup{7}{-1.4}{0.6}{0.3}
\lcapcup{7}{-1.1}{0.6}{0.3}
\rcapcup{7}{-2.8}{0.6}{0.3}
\lcapcup{7}{-2.5}{0.6}{0.3}
\hookl{0}{0}{0.6}{0.6}
\hookl{3}{1.4}{0.6}{0.6}
\hookl{3}{0}{0.6}{0.6}
\hookl{3}{-1.4}{0.6}{0.6}
\hookl{3}{-2.8}{0.6}{0.6}
\hookl{7}{0}{0.6}{0.6}
\hookl{7}{-1.4}{0.6}{0.6}
\hookl{7}{-2.8}{0.6}{0.6}
\draw[->] (1.1,0.4) -- node [above,sloped,scale=0.8] {$S$} (2.7,1.7);
\draw[->] (1.1,0.3) -- node [above,scale=0.8] {$S$} (2.7,0.3);
\draw[->] (1.1,0.2) -- node [above,sloped,scale=0.8] {$S$} (2.7,-1.1);
\draw[->] (1.1,0.1) -- node [above,sloped,scale=0.8] {$S$} (2.7,-2.5);
\draw[->] (4.7,1.7) -- (5.8,1.7);
\draw[->] (4.7,0.3) -- (5.8,1.5);
\draw[->] (4.7,-1.1) -- (5.8,1.3);
\draw[->] (4.7,-2.4) -- node [above,sloped,scale=0.8] {$S$} (6.7,0.2);
\draw[->] (4.7,-2.5) -- node [above,sloped,scale=0.8,near end] {$S$} (6.7,-1.2);
\draw[->] (4.7,-2.6) -- node [above,scale=0.8] {$S$} (6.7,-2.6);
\draw[-,line width=6pt,draw=white] (4.7,1.6) -- (6.7,0.3);
\draw[->] (4.7,1.6) -- node [above,sloped,scale=0.8,near end] {$-S$} (6.7,0.3);
\draw[-,line width=6pt,draw=white] (4.7,0.2) -- (6.7,-1.1);
\draw[->] (4.7,0.2) -- node [above,sloped,scale=0.8] {$-S$} (6.7,-1.1);
\draw[-,line width=6pt,draw=white] (4.7,-1.2) -- (6.7,-2.5);
\draw[->] (4.7,-1.2) -- node [above,sloped,scale=0.8,near end] {$-S$} (6.7,-2.5);
\node at (6.3,1.7) {$\cdots$};
\node at (4.3,0) {$\{1\}$};
\node at (4.3,1.4) {$\{1\}$};
\node at (4.3,-1.4) {$\{1\}$};
\node at (4.3,-2.8) {$\{1\}$};
\node at (8.3,0) {$\{2\}$};
\node at (8.3,-1.4) {$\{2\}$};
\node at (8.3,-2.8) {$\{2\}$};
\end{tikzpicture}
\]
We can still cancel the fourth homological degree $1$ generator with the delooped homological degree $2$ generator. This creates some zigzags starting from the first homological degree $1$ generator. However, we can deloop the homological degree $0$ generator, and one of the new generators cancels the first homological degree $1$ generator. The remaining two homological degree $1$ generators can be delooped, and the $+1$-shifted version cancelled with a homological degree $2$ generator.

The result is
\[
\begin{tikzpicture}
\tstrand{0}{0}{0.6}{0.6}
\tlcapcup{3}{0.7}{0.6}{0.6}
\trcapcup{3}{-0.7}{0.6}{0.6}
\draw[->] (1.4,0.4) -- node [above,sloped,scale=0.8] {$S$} (2.7,1);
\draw[->] (1.4,0.2) -- node [above,sloped,scale=0.8] {$S$} (2.7,-0.4);
\draw[->] (4,1) -- (4.5,1);
\draw[->] (4,-0.4) -- (4.5,-0.4);
\node at (5,0.3) {$\cdots$};
\node at (1,0) {$\{-1\}$};
\end{tikzpicture}
\]
Tensoring with $\CK^\ast(\sigma_2)$, and closing the two leftmost endpoints of the braid allows us to deloop and cancel as in the previous step, until we get
\[
\begin{tikzpicture}
\strand{0}{0}{0.6}
\strand{0.3}{0}{0.6}
\capcup{2.5}{0}{0.6}{0.6}
\draw[->] (1.2,0.3) -- node [above,scale=0.8] {$S$} (2.3,0.3);
\draw[->] (3.7,0.3) -- (4.7,0.3);
\node at (5.2,0.3) {$\cdots$};
\node at (0.8,0) {$\{-2\}$};
\node at (3.3,0) {$\{-1\}$};
\end{tikzpicture}
\]
Closing the remaining braid gives an $R-R$ bimodule complex starting with
\[
\begin{tikzpicture}
\node at (0,0) {$R\otimes R\{-2\}$};
\node at (3,0) {$R\{-1\}$};
\node at (5,0) {$\cdots$};
\draw[->] (1,0) -- node [above] {$\mu$} (2.4,0);
\draw[->] (3.6,0) -- node [above] {$\varepsilon$} (4.6,0);
\end{tikzpicture}
\]
Since $\mu$ is surjective and this is a cochain complex, we get $\varepsilon=0$. After a quantum shift involving $n(n+2)-1$ positive crossings, the result follows for $n=3$.

For arbitrary $n$ we observe that we get $n$ generators in homological degree $1$ in (\ref{eq:genericstart}), which remains true up to the braid word $(\sigma_1\cdots \sigma_n)^{n+1}$. With every letter in the remaining word $\sigma_1\cdots\sigma_{n-1}$ we get one less generator in homological degree $1$ just as above.
\end{proof}

\begin{remark}\label{rem:extraD}
For $n\leq 7$ computer calculations show that
\[
\CK^\ast(L_n) \simeq E^\ast\{n(n+1)\} \oplus D^\ast(2)[2]\{n(n+1)+3\}\oplus H^\ast
\]
with $H^\ast$ an $R-R$ bimodule complex concentrated in homological degrees bigger than $3$. We believe this to be true for general $n$, and consider the stability results of \cite{MR2308944} as supporting evidence. Attempting to prove this along the current arguments seems to be somewhat tedious though.

But the existence of a $D^\ast(2)$ direct summand in the chain homotopy type of $\CK^\ast(L_3)$ shows that we can replace the trefoil by $L_3$ in Theorem \ref{thm:maintheorem}. Indeed, we can replace the trefoil by any link which has a $D^\ast(2)$ direct summand in the chain homotopy type of its Khovanov complex.
\end{remark}

\begin{proof}[Proof of Theorem \ref{thm:maintheorem}]
Up to chain homotopy, $\CK^\ast(L_3^k;\Z_{(3)})$ contains a direct summand
\[
 (C^\ast(3;\R)[8]\{25\})^{\otimes l} \otimes_{\R} (E_3^\ast\{12\})^{\otimes k-l} \otimes_{\R} D^\ast(2;\R)[2]\{7\}
\]
for every $l\in \{1,\ldots,k\}$ by Lemma \ref{lem:mainlemma}, Lemma \ref{lem:beginner}, (\ref{eq:connectedsum}) and (\ref{eq:T23complex}). Here we use the notation $E_3^\ast = E^\ast \otimes \Z_{(3)}$.

By Lemma \ref{lem:removeE} such a direct summand is chain homotopy equivalent to
\begin{equation}\label{eq:tensorCCD}
 C^\ast(3;\R)^{\otimes l-1} \otimes_{\R} C^\ast(\pm 3;\R)\otimes_{\R} D^\ast(2;\R)[8l+2]\{11k+14l+7\}.
\end{equation}
By Lemma \ref{lem:tensorC} and Lemma \ref{lem:tensorCD} we get plenty of direct summands
\[
 D^\ast(2\cdot 3^l;\R)
\]
suitably shifted. Each of these direct summands creates a $\Z_{(3)}/2\cdot3^l\Z_{(3)}\cong \Z/3^l\Z$ direct summand in $H_{\mathrm{Kh}}^\ast(L_3;\Z_{(3)})$. Since $\Z_{(3)}$ is a localization of $\Z$,
\[
 H_{\mathrm{Kh}}^\ast(L_3;\Z_{(3)}) \cong H_{\mathrm{Kh}}^\ast(L_3;\Z) \otimes \Z_{(3)},
\]

and these direct summands have to be already present in $H_{\mathrm{Kh}}^\ast(L_3;\Z)$.
\end{proof}

\begin{remark}
From (\ref{eq:tensorCCD}) we can work out some of the bidegrees where $3^l$-torsion occurs. To get a direct summand $D^\ast(2\cdot 3^l;\R)$, we need to apply Lemma \ref{lem:tensorC} $(l-1)$-times, and Lemma \ref{lem:tensorCD} once.  If we only focus on minimal homological degree in these lemmas, we get a direct summand
\[
 D^\ast(2\cdot 3^l;\R)[8l+2]\{11k+12l+7\},
\]
and by focussing on maximal homological degree we get a direct summand
\[
 D^\ast(2\cdot 3^l;\R)[9l+2]\{11k+16l+7\}.
\]
Given that $D^\ast(n)$ has $n$-torsion in bidegree $(1,0)$, we get a summand $\Z/3^l\Z$ in the Khovanov cohomology of $L^k_3$ in bidegrees 
\[
(8l+3,11k+12l+7) \mbox{ and }(9l+3,11k+16l+7).
\]
Analyzing Lemma \ref{lem:tensorC} and Lemma \ref{lem:tensorCD} a bit more carefully, we see that in bidegree 
\[
 (8l+3+m,11k+12l+7+4m)
\]
there are at least $\begin{pmatrix} l \\ m \end{pmatrix}$ copies of $\Z/3^l\Z$ for $m=0,\ldots,l$.

For $l<k$ there exist more direct summands of $\Z/3^l\Z$ in the Khovanov cohomology of $L_3^k$. In view of Remark \ref{rem:extraD} this is not surprising. But for $l=k$ calculations up to $k=6$ have found these to be all the direct summands of $\Z/3^k\Z$.
\end{remark}

Computer calculations show that
\begin{equation}\label{eq:M5}
 \CK^\ast(L_5;\Z_{(5)}) \simeq F_5^\ast \oplus C^\ast(5;R_5)[18]\{55\}
\end{equation}
where $F_5^\ast$ is a finitely generated $R_5-R_5$ bimodule complex concentrated in homological degrees $0$ to $17$, and
\begin{equation}\label{eq:M7}
 \CK^\ast(L_7;\Z_{(7)}) \simeq F_7^\ast \oplus C^\ast(7;R_7)[32]\{97\}
\end{equation}
where $F_7^\ast$ is a finitely generated $R_7-R_7$ bimodule complex concentrated in homological degrees $0$ to $32$. With the same arguments as in the proof of Theorem \ref{thm:maintheorem} this confirms Conjecture \ref{con:mainconj} for $p=5$ and $p=7$.

\begin{remark}
From the proof of Lemma \ref{lem:mainlemma} it seems unlikely that the analogous statement works for $\Z$ coefficients. However, we really only needed to invert $2$. Similarly, for (\ref{eq:M5}) and (\ref{eq:M7}) we only need to invert the primes $2$ and $3$.

In particular, we can form a `mixed' link
\[
 L = L_5^{\# r} \co L_7^{\# s} \co T(2,3)
\]
which has direct summands $\Z/ 5^r7^s \Z$ in several single bidegrees of its Khovanov cohomology.
\end{remark}

Obtaining (\ref{eq:M5}) and (\ref{eq:M7}) by hand seems daunting, but may not be impossible. A general technique may also work for other odd primes. It is encouraging that the direct summand is at the top in terms of supported homological degrees. We note however that $L_5$ has $3$-torsion in homological degree $20$, and $L_7$ has $2$- and $3$-torsion in homological degree $34$, that is, above the homological support of the localized versions.

Nevertheless we refine Conjecture \ref{con:mainconj} to
\begin{conjecture}
Let $p$ be an odd prime. Then $\CK^\ast(L_p;\Z_{(p)})$, viewed as a $R_p-R_p$ bimodule complex contains $C^\ast(p;R_p)$ suitably shifted as a direct summand up to chain homotopy.
\end{conjecture}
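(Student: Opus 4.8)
The plan is to carry over the proof of Lemma~\ref{lem:mainlemma}, which establishes the conjecture for $p=3$, to an arbitrary odd prime $p$. Apply Bar-Natan's scanning algorithm \cite{MR2320156} to the torus knot $T(p+1,p+2)$, presented by the braid word $(\sigma_1\cdots\sigma_p)^{p+2}$, stopping one crossing before the end; since the boundary set $B$ then has only four points, closing up the two endpoints on the left and the two on the right---as in Lemma~\ref{lem:mainlemma}---produces a representative of $\CK^\ast(L_p)$ which, after base change to $\Z_{(p)}$, we regard as an $R_p-R_p$ bimodule complex via basepoints on its two components. Over $\Z_{(p)}$ every prime $q\neq p$ is already a unit, so one expects, exactly as for $p=3$, that two Gaussian eliminations in the top homological degrees collapse the tail of this complex to a single morphism between two copies of $R_p\otimes R_p$, and that this morphism is a unit multiple of $\delta_p$ (possibly with $p$ replaced by $-p$). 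A change of basis then splits off $C^\ast(p;R_p)$, suitably shifted.

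The essential ingredient is a uniform description of the top two or three homological degrees of $\CK^\ast(L_p)$---call it the \emph{tail}---paralleling the description of the head given by Lemma~\ref{lem:beginner}. Because we stop one crossing short, $B$ has four points, so after delooping every generator of the resulting complex is one of the two crossingless tangles on $B$, and every matrix entry is a $\Z$-linear combination of a surgery $S$, a dotted surgery $\dot S$, and dotted cylinders. I would track, by induction over the scanning process, the number of copies of each object in the top homological degrees together with these coefficients, aiming to show that the tail stabilises into the shape exhibited in \eqref{eq:l3ending} for $p=3$ (whose morphisms $\pm S,\pm 2S,\pm\dot S,\pm 2$ become combinations of $\mu$, $\Delta$ and $X$ after closing), and to record the integers involved as explicit functions of $p$. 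The head analysis of Lemma~\ref{lem:beginner} shows the bottom of the complex degenerates, after the word $(\sigma_1\cdots\sigma_p)^{p+1}$, by peeling off one generator in homological degree $1$ per trailing letter of $\sigma_1\cdots\sigma_{p-1}$; I expect a dual stabilisation at the top, and would look to the stability results of \cite{MR2308944} (already invoked in Remark~\ref{rem:extraD}) to justify that some stable pattern exists, the real work being to extract from it the precise cobordism-level data.

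Granting the tail pattern, the remaining step is the purely formal homological algebra of Lemma~\ref{lem:mainlemma}. In the top degrees one locates an ``identity-like'' morphism---for $p=3$ this was the copy of $\R\{23\}$ appearing in homological degrees $7$ and $8$ of the closed complex, joined by the scalar morphism $2$, which is an isomorphism over $\R$---and cancels it by Gaussian elimination \cite[Lem.~3.2]{MR2320156}, computing the zig-zag corrections as in the proofs of Lemma~\ref{lem:tensorC} and Lemma~\ref{lem:mainlemma}. A second such elimination leaves two copies of $R_p\otimes R_p$ joined by one coboundary; the claim to verify is that this coboundary equals a unit multiple of $pX\otimes 1-1\otimes X=\delta_p(1\otimes 1)$, i.e.\ that the surviving coefficient of $X\otimes 1$ relative to that of $1\otimes X$ is $\pm p$ up to a unit. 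For $p=3$ this is the computation $\Delta\circ\mu-(-2\Delta)\circ\tfrac12\circ(-\mu)=0$ and $2X\otimes 1-\Delta\circ\tfrac12\circ\mu=\tfrac12\delta_3$; in general it needs the closed-form coefficients from the first ingredient.

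The main obstacle is thus the tail analysis: proving that the scanning complex for $L_p$ thins out at the top into the shape and \emph{with the exact integer coefficients} observed for $p\leq 7$, rather than merely the right ranks. The complex is ``fat'' in low homological degrees, so this is genuinely more delicate than the head analysis of Lemma~\ref{lem:beginner}. Two further points need care. First, one must check that the morphisms required to be isomorphisms for the two eliminations all have coefficients coprime to $p$ (so that they are invertible over $\Z_{(p)}$); this holds for $p=3,5,7$, where only $2$ and $3$ had to be inverted, but needs proof in general. Second, $\CK^\ast(L_p)$ carries torsion of order coprime to $p$ in homological degrees \emph{above} the support of the split-off $C^\ast(p;R_p)$ (for example $3$-torsion for $L_5$, and $2$- and $3$-torsion for $L_7$); this torsion vanishes over $\Z_{(p)}$, which is precisely why the localized complex can be so small at the top, and it is why the argument cannot be run integrally. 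An alternative route via the unoriented skein triangle for the missing crossing of $T(p+1,p+2)$, whose oriented resolution is $L_p$, does not obviously help, as it trades the problem for an understanding of the Khovanov complex of a torus knot, which is no easier.
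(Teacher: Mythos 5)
The statement you are trying to prove is stated in the paper as a conjecture, and the paper itself does not prove it: for $p=3$ the corresponding splitting is Lemma~\ref{lem:mainlemma}, whose key input, the terminal segment (\ref{eq:l3ending}) of the scanned complex, was obtained with computer assistance (Appendix~\ref{app:handson} only sketches the cancellation stages and explicitly says that writing down the steps for $L_5$ and $L_7$ seems hopeless), and for $p=5,7$ the analogues (\ref{eq:M5}) and (\ref{eq:M7}) are purely computational. Your proposal reproduces exactly this strategy---scan $(\sigma_1\cdots\sigma_p)^{p+2}$ minus one crossing, close up the four boundary points to get the $R_p-R_p$ bimodule structure, localize at $p$, perform two Gaussian eliminations in the top degrees, and identify the surviving coboundary with a unit multiple of $\delta_p$---but the step you yourself single out as the essential ingredient, namely a \emph{proved}, uniform description of the top homological degrees of the scanning complex with exact integer coefficients, is precisely the open content of the conjecture and is nowhere established in your text. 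Without it, neither the two eliminations nor the final computation identifying the residual map with $\tfrac{1}{2}\delta_3$-type expressions can even be set up, so what you have is a programme, not a proof.

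Two specific points in the plan would also need repair rather than hope. First, the stability results of \cite{MR2308944} are invoked in the paper only as heuristic support for a different expectation (Remark~\ref{rem:extraD}, concerning a $D^\ast(2)$ summand in \emph{low} homological degrees); stable phenomena for torus links do not control the cobordism-level matrix entries at the top of the Bar-Natan complex, nor their integer coefficients up to units in $\Z_{(p)}$, which is exactly the data your two eliminations and the coefficient comparison against $pX\otimes 1 - 1\otimes X$ require. Second, the head analysis of Lemma~\ref{lem:beginner} works because homological degrees $0$ and $1$ retain a small, self-reproducing shape throughout the scan; at the top there is no analogous invariant shape, since the maximal homological degree increases with every crossing and the generators there are created and cancelled in bulk against lower degrees---this is why the authors resorted to machine computation even for $p=3$. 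Until you can prove the tail pattern (including which coefficients are units in $\Z_{(p)}$) for general odd $p$, the conjecture remains open for $p\ge 11$, and your argument establishes nothing beyond the cases $p\le 7$ already covered in the paper.
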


\begin{remark}
One can ask whether connected sums of knots can increase the order of torsion in Khovanov cohomology. Indeed, in \cite{2019arXiv190606278M} the first author observed that the connected sum of $T(5,6)$ with itself gives rise to torsion of order $9$. However, a connected sum of three or four $T(5,6)$ does not give rise to torsion of order greater than $9$.

We can consider $\CK^\ast(K)$ as an $R-R$ bimodule by placing two basepoints on $K$, but since the connected sum of knots does not depend on where the basepoint sits, this bimodule structure cannot be as asymmetric as in the case of $L_3$. While a summand $C^\ast(p;\Z_{(p)})$ could be balanced by another summand which flips the $R-R$ bimodule structure, it seems unlikely to get examples with that.
\end{remark}

\appendix

\section{A hands-on proof of Lemma \ref{lem:mainlemma}}
\label{app:handson}
The purpose of this appendix is to give stages in the Bar-Natan algorithm which lead to (\ref{eq:l3ending}). Instead of scanning each crossing one can use the original divide-and-conquer approach of \cite{MR2320156}. Furthermore, we are only interested in the higher homological degrees of the final complex, so in the later steps we can ignore lower homological degrees.
There are still a lot of cancellations required and we do not give every detail. In the cases of $L_5$ and $L_7$ it seems hopeless trying to write down the steps.

For the first three crossings we can never deloop or cancel, and $\CK^\ast(\sigma_1\sigma_2\sigma_3)$ is as in Figure \ref{fig:firstthree}.
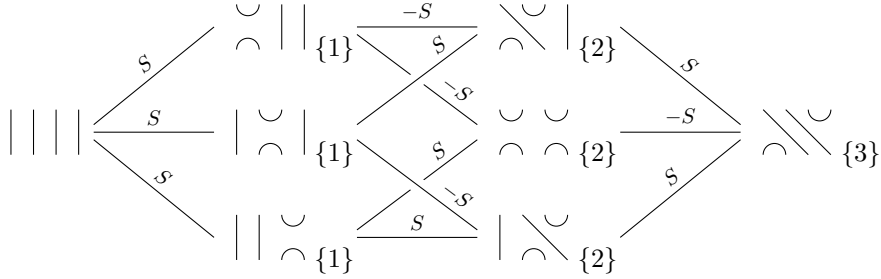
\begin{figure}[ht]
\begin{center}
\begin{tikzpicture}
\fstrand{0}{0}{0.6}{0.6}
\lcapcup{3}{1.4}{0.6}{0.6}
\mcapcup{3}{0}{0.6}{0.6}
\rcapcup{3}{-1.4}{0.6}{0.6}
\lbslash{6.5}{1.4}{0.6}{0.6}
\dcapcup{6.5}{0}{0.6}{0.6}
\rbslash{6.5}{-1.4}{0.6}{0.6}
\dbslash{10}{0}{0.6}{0.6}
\draw[-] (1.1,0.4) -- node[above,sloped,scale=0.8] {$S$} (2.7,1.7);
\draw[-] (1.1,0.3) -- node[above,scale=0.8] {$S$} (2.7,0.3);
\draw[-] (1.1,0.2) -- node[above,sloped,scale=0.8] {$S$} (2.7,-1.1);
\draw[-] (4.6,1.7) -- node[above,scale=0.8] {$-S$} (6.2,1.7);
\draw[-] (4.6,1.6) -- node[above,scale=0.8,sloped,near end] {$-S$} (6.2,0.4);
\draw[-] (4.6,-1.1) -- node[above,scale=0.8] {$S$} (6.2,-1.1);
\draw[-] (4.6,-1) -- node[above,sloped,scale=0.8,near end] {$S$} (6.2,0.2);
\draw[-,line width=6pt,draw=white] (4.6,0.4) -- (6.2,1.6);
\draw[-,line width=6pt,draw=white] (4.6,0.2) -- (6.2,-1);
\draw[-] (4.6,0.4) -- node[above,sloped,scale=0.8,near end] {$S$} (6.2,1.6);
\draw[-] (4.6,0.2) -- node[above,sloped,scale=0.8,near end] {$-S$} (6.2,-1);
\draw[-] (8.1,1.7) -- node[above,sloped,scale=0.8] {$S$} (9.7,0.4);
\draw[-] (8.1,0.3) -- node[above,sloped,scale=0.8] {$-S$} (9.7,0.3);
\draw[-] (8.1,-1.1) -- node[above,sloped,scale=0.8] {$S$} (9.7,0.2);
\node at (4.3,0) {$\{1\}$};
\node at (4.3,1.4) {$\{1\}$};
\node at (4.3,-1.4) {$\{1\}$};
\node at (7.8,1.4) {$\{2\}$};
\node at (7.8,0) {$\{2\}$};
\node at (7.8,-1.4) {$\{2\}$};
\node at (11.3,0) {$\{3\}$};
\end{tikzpicture}
\caption{\label{fig:firstthree} The cochain complex $\CK^\ast(\sigma_1\sigma_2\sigma_3)$.}
\end{center}
\end{figure}

We now form $\CK^\ast(\sigma_1\sigma_2\sigma_3)\otimes \CK^\ast(\sigma_1\sigma_2\sigma_3)$, and begin with the delooping and cancelling. It turns out that the generator in homological degree $6$ can be cancelled, and all other generators in homological degree $5$ can also be cancelled. Indeed, only two generators in homological degree $4$ survive. As in Lemma \ref{lem:beginner} we can reduce the number of homological degree $1$ generators to three.

The resulting cochain complex $C^\ast$ is depicted in Figure \ref{fig:aftersix}.
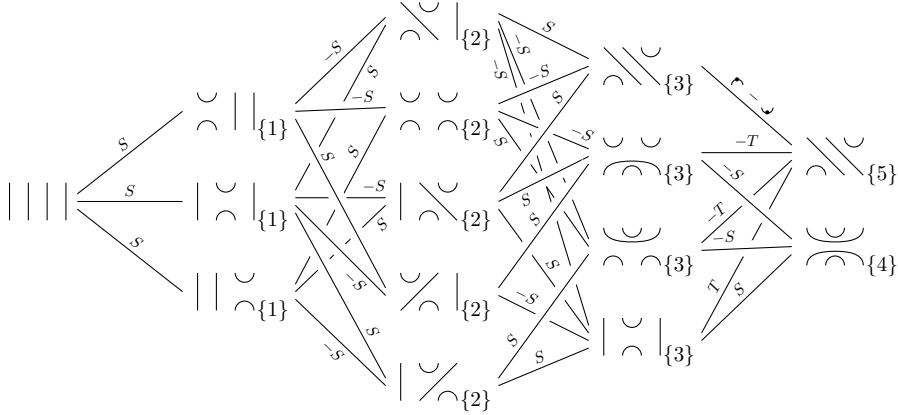
\begin{figure}
\begin{center}
\begin{tikzpicture}
\fstrand{0}{0}{0.5}{0.5}
\lcapcup{2.5}{1.2}{0.5}{0.5}
\mcapcup{2.5}{0}{0.5}{0.5}
\rcapcup{2.5}{-1.2}{0.5}{0.5}
\lbslash{5.2}{2.4}{0.5}{0.5}
\dcapcup{5.2}{1.2}{0.5}{0.5}
\rbslash{5.2}{0}{0.5}{0.5}
\lslash{5.2}{-1.2}{0.5}{0.5}
\rslash{5.2}{-2.4}{0.5}{0.5}
\dbslash{7.9}{1.8}{0.5}{0.5}
\dlcap{7.9}{0.6}{0.5}{0.5}
\ducup{7.9}{-0.6}{0.5}{0.5}
\mcapcup{7.9}{-1.8}{0.5}{0.5}
\dbslash{10.6}{0.6}{0.5}{0.5}
\dulcp{10.6}{-0.6}{0.5}{0.5}
\node[scale=0.8] at (3.5,1.2) {$\{1\}$};
\node[scale=0.8] at (3.5,0) {$\{1\}$};
\node[scale=0.8] at (3.5,-1.2) {$\{1\}$};
\node[scale=0.8] at (6.2,2.4) {$\{2\}$};
\node[scale=0.8] at (6.2,1.2) {$\{2\}$};
\node[scale=0.8] at (6.2,0) {$\{2\}$};
\node[scale=0.8] at (6.2,-1.2) {$\{2\}$};
\node[scale=0.8] at (6.2,-2.4) {$\{2\}$};
\node[scale=0.8] at (8.9,1.8) {$\{3\}$};
\node[scale=0.8] at (8.9,0.6) {$\{3\}$};
\node[scale=0.8] at (8.9,-0.6) {$\{3\}$};
\node[scale=0.8] at (8.9,-1.8) {$\{3\}$};
\node[scale=0.8] at (11.6,0.6) {$\{5\}$};
\node[scale=0.8] at (11.6,-0.6) {$\{4\}$};
\draw[-] (0.9,0.35) -- node [above,sloped,scale=0.6] {$S$} (2.3,1.45);
\draw[-] (0.9,0.25) -- node [above,sloped,scale=0.6] {$S$} (2.3,0.25);
\draw[-] (0.9,0.15) -- node [above,sloped,scale=0.6] {$S$} (2.3,-0.95);
\draw[-] (3.8,-0.95) -- node [below,sloped,scale=0.6,very near end] {$S$} (5,0.2);
\draw[-] (3.8,-0.85) -- node [above,sloped,scale=0.6,near end] {$S$} (5,1.4);
\draw[-,line width=6pt,draw=white] (3.8,0.1) -- (5,-2.1);
\draw[-,line width=6pt,draw=white] (3.8,0.2) -- (5,-1);
\draw[-] (3.8,0.3) -- node [above,scale=0.6,very near end] {$-S$} (5,0.3);
\draw[-] (3.8,0.4) -- node [below,sloped,scale=0.6,near end] {$S$} (5,2.6);
\draw[-,line width=6pt,draw=white] (3.8,1.35) -- (5,-0.9);
\draw[-] (3.8,1.35) -- node [above,sloped,scale=0.6,near start] {$S$} (5,-0.9);
\draw[-,line width=6pt,draw=white] (3.8,1.45) -- (5,1.5);
\draw[-] (3.8,1.45) -- node [above,sloped,scale=0.6,near end] {$-S$} (5,1.5);
\draw[-] (3.8,1.55) -- node [above,sloped,scale=0.6] {$-S$} (5,2.7);
\draw[-] (3.8,-1.05) -- node [below,sloped,scale=0.6] {$-S$}(5,-2.2);
\draw[-] (3.8,0.1) -- node [above,sloped,scale=0.6,near end] {$S$} (5,-2.1);
\draw[-] (3.8,0.2) -- node [below,sloped,scale=0.6,near end] {$-S$} (5,-1);
\draw[-] (6.5,2.55) -- node [below,sloped,scale=0.6,very near start] {$-S$} (7.7,-1.4);
\draw[-] (6.5,2.65) -- node [above,sloped,scale=0.6,very near start] {$-S$} (7.7,-0.25);
\draw[-] (6.5,2.75) -- node [above,sloped,scale=0.6] {$S$} (7.7,2.15);
\draw[-,line width=6pt,draw=white] (6.5,1.55) -- (7.7,2.05);
\draw[-] (6.5,1.55) -- node [above,sloped,scale=0.6] {$-S$} (7.7,2.05);
\draw[-,line width=6pt,draw=white] (6.5,1.45) -- (7.7,0.95);
\draw[-] (6.5,1.45) -- node [above,sloped,scale=0.6,very near end] {$-S$} (7.7,0.95);
\draw[-] (6.5,1.35) -- node [below,sloped,scale=0.6,very near start] {$S$} (7.7,-0.35);
\draw[-,line width=6pt,draw=white] (6.5,0.35) -- (7.7,1.95);
\draw[-] (6.5,0.35) -- node [above,sloped,scale=0.6,near end] {$S$} (7.7,1.95);
\draw[-,line width=6pt,draw=white] (6.5,0.25) -- (7.7,0.85);
\draw[-] (6.5,0.25) -- node [below,sloped,scale=0.6,near start] {$S$} (7.7,0.85);
\draw[-] (6.5,0.15) -- node [above,sloped,scale=0.6] {$S$} (7.7,-1.5);
\draw[-,line width=6pt,draw=white] (6.5,-0.9) -- (7.7,0.75);
\draw[-] (6.5,-0.9) -- node [above,sloped,scale=0.6] {$S$} (7.7,0.75);
\draw[-] (6.5,-1) -- node [above,sloped,scale=0.6,near start] {$-S$} (7.7,-1.6);
\draw[-,line width=6pt,draw=white] (6.5,-2.1) -- (7.7,-0.45);
\draw[-] (6.5,-2.1) -- node [above,sloped,scale=0.6,near start] {$S$} (7.7,-0.45);
\draw[-] (6.5,-2.2) -- node [above,sloped,scale=0.6] {$S$} (7.7,-1.7);
\draw[-] (9.2,-1.6) -- node [above,sloped,scale=0.6] {$S$} (10.4,-0.45);
\draw[-] (9.2,-1.5) -- node [above,sloped,scale=0.6,near start] {$T$} (10.4,0.7);
\draw[-,line width=6pt,draw=white] (9.2,-0.4) -- (10.4,-0.35);
\draw[-] (9.2,-0.4) -- node [above,sloped,scale=0.6,near start] {$-S$} (10.4,-0.35);
\draw[-] (9.2,-0.3) -- node [above,sloped,scale=0.6,near start] {$-T$} (10.4,0.8);
\draw[-,line width=6pt,draw=white] (9.2,0.8) -- (10.4,-0.25);
\draw[-] (9.2,0.8) -- node [above,sloped,scale=0.6,near start] {$-S$} (10.4,-0.25);
\draw[-] (9.2,0.9) -- node [above,sloped,scale=0.6] {$-T$} (10.4,0.9);
\draw[-] (9.2,2.05) -- node [above,sloped,scale=0.6] {$-$} (10.4,1);
\jcap{9.6}{1.8}{0.3}{0.3}
\jcup{10}{1.2}{0.3}{0.3}
\node[scale=0.4] at (9.675,1.87) {$\bullet$};
\node[scale=0.4] at (10.075,1.43) {$\bullet$};
\end{tikzpicture}
\caption{\label{fig:aftersix} The cochain complex $C^\ast$ chain homotopy equivalent to $\CK^\ast((\sigma_1\sigma_2\sigma_3)^2)$. The morphism $T$ stands for two surgeries.}
\end{center}
\end{figure}

We now need to form $C^\ast\otimes C^\ast$. Obviously, this has a lot of generators. For the next steps we only need the top half of the cochain complex after cancellations. In Figure \ref{fig:bigend} we show the generators in homological degrees $6$ to $8$. A few more of the homological degree $5$ generators are needed for later cancellations, but not all. We omit the details.
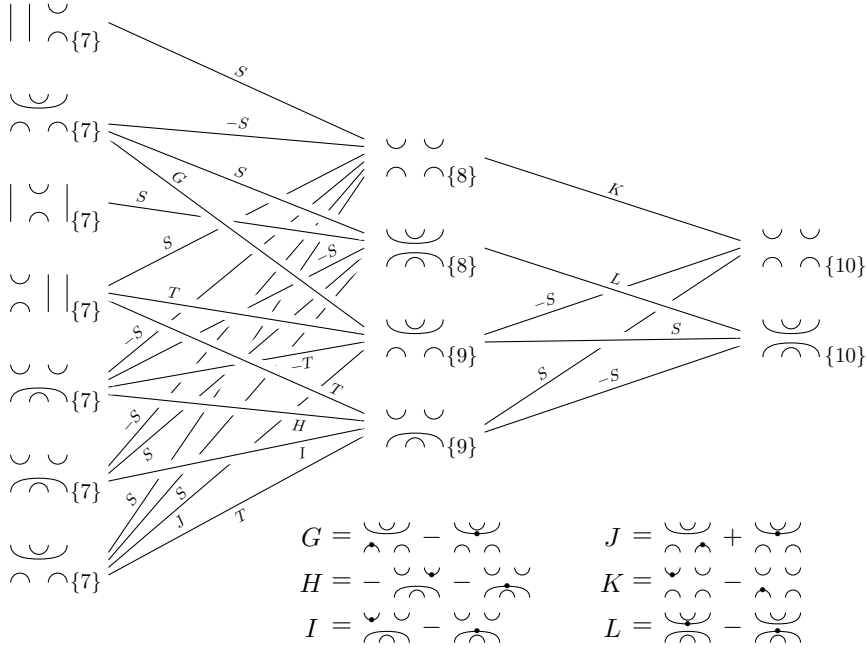
\begin{figure}[ht]
\begin{center}
\begin{tikzpicture}
\rcapcup{0}{3.6}{0.5}{0.5}
\ducup{0}{2.4}{0.5}{0.5}
\mcapcup{0}{1.2}{0.5}{0.5}
\lcapcup{0}{0}{0.5}{0.5}
\dlcap{0}{-1.2}{0.5}{0.5}
\dlcap{0}{-2.4}{0.5}{0.5}
\ducup{0}{-3.6}{0.5}{0.5}
\dcapcup{5}{1.8}{0.5}{0.5}
\dulcp{5}{0.6}{0.5}{0.5}
\ducup{5}{-0.6}{0.5}{0.5}
\dlcap{5}{-1.8}{0.5}{0.5}
\dcapcup{10}{0.6}{0.5}{0.5}
\dulcp{10}{-0.6}{0.5}{0.5}
\draw[-] (1.3,-3.2) -- node [above,sloped,scale=0.6,very near start] {$S$} (4.7,1.8);
\draw[-] (1.3,-3.3) -- node [below,sloped,scale=0.6,near start] {$S$} (4.7,0.65);
\draw[-] (1.3,-3.4) -- node [below,sloped,scale=0.6,near start] {$J$} (4.7,-0.5);
\draw[-] (1.3,-3.5) -- node [below,sloped,scale=0.6] {$T$} (4.7,-1.65);
\draw[-,line width=6pt,draw=white] (1.3,-2.25) -- (4.7,-1.55);
\draw[-] (1.3,-2.25) -- node [below,sloped,scale=0.6,near end] {$I$} (4.7,-1.55);
\draw[-,line width=6pt,draw=white] (1.3,-2.15) -- (4.7,0.75);
\draw[-] (1.3,-2.15) -- node [below,sloped,scale=0.6,very near start] {$S$} (4.7,0.75);
\draw[-] (1.3,-2.05) -- node [above,sloped,scale=0.6,very near start] {$-S$} (4.7,1.9);
\draw[-,line width=6pt,draw=white] (1.3,-1.1) -- (4.7,-1.45);
\draw[-] (1.3,-1.1) -- node [below,sloped,scale=0.6,near end] {$H$} (4.7,-1.45);
\draw[-,line width=6pt,draw=white] (1.3,-1) -- (4.7,-0.4);
\draw[-] (1.3,-1) -- node [below,sloped,scale=0.6,near end] {$-T$}(4.7,-0.4);
\draw[-,line width=6pt,draw=white] (1.3,-0.9) -- (4.7,0.85);
\draw[-] (1.3,-0.9) -- node [above,sloped,scale=0.6,very near end] {$-S$} (4.7,0.85);
\draw[-] (1.3,-0.8) -- node [above,sloped,scale=0.6,very near start] {$-S$} (4.7,2);
\draw[-,line width=6pt,draw=white] (1.3,0.15) -- (4.7,-1.35);
\draw[-] (1.3,0.15) -- node [above,sloped,scale=0.6,very near end] {$T$} (4.7,-1.35);
\draw[-,line width=6pt,draw=white] (1.3,0.25) -- (4.7,-0.3);
\draw[-] (1.3,0.25) -- node [above,sloped,scale=0.6,near start] {$T$} (4.7,-0.3);
\draw[-] (1.3,0.35) -- node [above,sloped,scale=0.6,near start] {$S$} (4.7,2.1);
\draw[-,line width=6pt,draw=white] (1.3,1.45) -- (4.7,0.95);
\draw[-] (1.3,1.45) -- node [above,sloped,scale=0.6,very near start] {$S$} (4.7,0.95);
\draw[-,line width=6pt,draw=white] (1.3,2.3) -- (4.7,-0.2);
\draw[-] (1.3,2.3) -- node [above,sloped,scale=0.6,near start] {$G$} (4.7,-0.2);
\draw[-,line width=6pt,draw=white] (1.3,2.4) -- (4.7,1.05);
\draw[-] (1.3,2.4) -- node [above,sloped,scale=0.6] {$S$} (4.7,1.05);
\draw[-] (1.3,2.5) -- node [above,sloped,scale=0.6] {$-S$} (4.7,2.2);
\draw[-] (1.3,3.85) -- node [above,sloped,scale=0.6] {$S$} (4.7,2.3);
\draw[-] (6.3,-1.6) -- node [above,sloped,scale=0.6] {$-S$} (9.7,-0.45);
\draw[-] (6.3,-1.5) -- node [above,sloped,scale=0.6,near start] {$S$} (9.7,0.75);
\draw[-,line width=6pt,draw=white] (6.3,-0.4) -- (9.7,-0.35);
\draw[-] (6.3,-0.4) -- node [above,sloped,scale=0.6,near end] {$S$} (9.7,-0.35);
\draw[-] (6.3,-0.3) -- node [above,sloped,scale=0.6,near start] {$-S$} (9.7,0.85);
\draw[-,line width=6pt,draw=white] (6.3,0.85) -- (9.7,-0.25);
\draw[-] (6.3,0.85) -- node [above,sloped,scale=0.6] {$L$} (9.7,-0.25);
\draw[-] (6.3,2.05) -- node [above,sloped,scale=0.6] {$K$} (9.7,0.95);
\node[scale=0.8] at (1,3.6) {$\{7\}$};
\node[scale=0.8] at (1,2.4) {$\{7\}$};
\node[scale=0.8] at (1,1.2) {$\{7\}$};
\node[scale=0.8] at (1,0) {$\{7\}$};
\node[scale=0.8] at (1,-1.2) {$\{7\}$};
\node[scale=0.8] at (1,-2.4) {$\{7\}$};
\node[scale=0.8] at (1,-3.6) {$\{7\}$};
\node[scale=0.8] at (6,1.8) {$\{8\}$};
\node[scale=0.8] at (6,0.6) {$\{8\}$};
\node[scale=0.8] at (6,-0.6) {$\{9\}$};
\node[scale=0.8] at (6,-1.8) {$\{9\}$};
\node[scale=0.8] at (11.1,0.6) {$\{10\}$};
\node[scale=0.8] at (11.1,-0.6) {$\{10\}$};
\node at (4,-3) {$G$};
\node at (4,-3.6) {$H$};
\node at (4,-4.2) {$I$};
\node at (8,-3) {$J$};
\node at (8,-3.6) {$K$};
\node at (8,-4.2) {$L$};
\node at (4.4,-3) {$=$};
\node at (4.4,-3.6) {$=$};
\node at (4.4,-4.2) {$=$};
\node at (8.4,-3) {$=$};
\node at (8.4,-3.6) {$=$};
\node at (8.4,-4.2) {$=$};
\node at (5.6,-3) {$-$};
\node at (4.8,-3.6) {$-$};
\node at (6,-3.6) {$-$};
\node at (5.6,-4.2) {$-$};
\node at (9.6,-3) {$+$};
\node at (9.6,-3.6) {$-$};
\node at (9.6,-4.2) {$-$};
\node[scale=0.5] at (4.8,-3.1) {$\bullet$};
\node[scale=0.5] at (6.2,-2.95) {$\bullet$};
\node[scale=0.5] at (5.6,-3.5) {$\bullet$};
\node[scale=0.5] at (6.6,-3.65) {$\bullet$};
\node[scale=0.5] at (4.8,-4.1) {$\bullet$};
\node[scale=0.5] at (6.2,-4.25) {$\bullet$};
\node[scale=0.5] at (9.2,-3.1) {$\bullet$};
\node[scale=0.5] at (10.2,-2.95) {$\bullet$};
\node[scale=0.5] at (8.8,-3.5) {$\bullet$};
\node[scale=0.5] at (10,-3.7) {$\bullet$};
\node[scale=0.5] at (9,-4.15) {$\bullet$};
\node[scale=0.5] at (10.2,-4.25) {$\bullet$};
\ducup{4.7}{-3.2}{0.4}{0.4}
\ducup{5.9}{-3.2}{0.4}{0.4}
\dlcap{5.1}{-3.8}{0.4}{0.4}
\dlcap{6.3}{-3.8}{0.4}{0.4}
\dlcap{4.7}{-4.4}{0.4}{0.4}
\dlcap{5.9}{-4.4}{0.4}{0.4}
\ducup{8.7}{-3.2}{0.4}{0.4}
\ducup{9.9}{-3.2}{0.4}{0.4}
\dcapcup{8.7}{-3.8}{0.4}{0.4}
\dcapcup{9.9}{-3.8}{0.4}{0.4}
\dulcp{8.7}{-4.4}{0.4}{0.4}
\dulcp{9.9}{-4.4}{0.4}{0.4}
\end{tikzpicture}
\end{center}
\caption{\label{fig:bigend}The cochain complex $D^\ast$, chain homotopy equivalent to $\CK^\ast((\sigma_1\sigma_2\sigma_3)^4)$, in homological degrees $6$ to $8$. The morphism $T$ stands for two surgeries.}
\end{figure}

We now form $D^\ast\otimes \CK^\ast(\sigma_1)$. After delooping and cancellations, we get a cochain complex $E^\ast$ ending in homological degree $9$ as in Figure \ref{fig:atlast}.
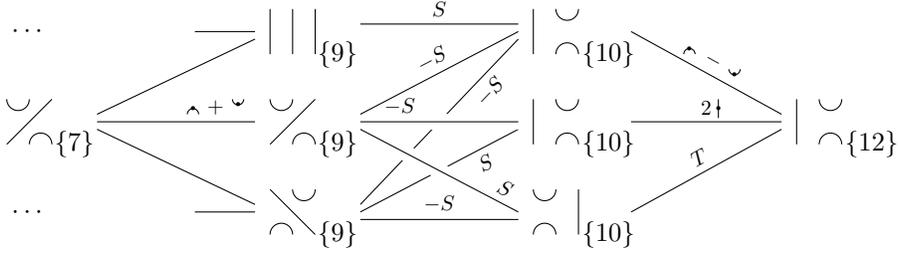
\begin{figure}[ht]
\begin{center}
\begin{tikzpicture}
\tstrand{0}{2.4}{0.6}{0.6}
\slashcps{0}{1.2}{0.6}{0.6}
\bslashcps{0}{0}{0.6}{0.6}
\trcapcup{3.5}{2.4}{0.6}{0.6}
\trcapcup{3.5}{1.2}{0.6}{0.6}
\tlcapcup{3.5}{0}{0.6}{0.6}
\trcapcup{7}{1.2}{0.6}{0.6}
\slashcps{-3.5}{1.2}{0.6}{0.6}
\node at (-3.2,2.7) {$\cdots$};
\node at (-3.2,0.3) {$\cdots$};
\draw[-] (-2.3,1.4) -- (-0.2,0.4);
\draw[-] (-2.3,1.5) -- node [above,scale=0.8,near end] {$+$} (-0.2,1.5);
\draw[-] (-2.3,1.6) -- (-0.2,2.6);
\draw[-] (-1,2.7) -- (-0.2,2.7);
\draw[-] (-1,0.3) -- (-0.2,0.3);
\draw[-] (1.2,0.2) -- node [above,scale=0.8] {$-S$} (3.3,0.2);
\draw[-] (1.2,0.3) -- node [below,sloped,scale=0.8,near end] {$S$} (3.3,1.4);
\draw[-] (1.2,0.4) -- node [below,sloped,scale=0.8,near end] {$-S$} (3.3,2.6);
\draw[-,line width=6pt,draw=white] (1.2,1.4) -- (3.3,0.3);
\draw[-,line width=6pt,draw=white] (1.2,1.5) -- (3.3,1.5);
\draw[-] (1.2,1.4) -- node [above,sloped,scale=0.8,very near end] {$S$} (3.3,0.3);
\draw[-] (1.2,1.5) -- node [above,scale=0.8,near start] {$-S$} (3.3,1.5);
\draw[-] (1.2,1.6) -- node [above,sloped,scale=0.8] {$-S$} (3.3,2.7);
\draw[-] (1.2,2.8) -- node [above,scale=0.8] {$S$} (3.3,2.8);
\draw[-] (4.8,0.3) -- node [above,sloped,scale=0.8] {$T$} (6.8,1.4);
\draw[-] (4.8,1.5) -- node [above,scale=0.8] {$2$} (6.8,1.5);
\draw[-] (4.8,2.7) -- node [above,sloped,scale=0.8] {$-$} (6.8,1.6);
\node at (-2.6,1.2) {$\{7\}$};
\node at (0.9,2.4) {$\{9\}$};
\node at (0.9,1.2) {$\{9\}$};
\node at (0.9,0) {$\{9\}$};
\node at (4.5,2.4) {$\{10\}$};
\node at (4.5,1.2) {$\{10\}$};
\node at (4.5,0) {$\{10\}$};
\node at (8,1.2) {$\{12\}$};
\jcap{-1.1}{1.6}{0.3}{0.3}
\jcup{-0.5}{1.5}{0.3}{0.3}
\node[scale=0.4] at (-1.025,1.675) {$\bullet$};
\node[scale=0.4] at (-0.425,1.725) {$\bullet$};
\strand{5.96}{1.55}{0.25}
\jcap{5.5}{2.4}{0.3}{0.3}
\jcup{6.1}{1.9}{0.3}{0.3}
\node[scale=0.4] at (5.96,1.675) {$\bullet$};
\node[scale=0.4] at (5.575,2.475) {$\bullet$};
\node[scale=0.4] at (6.175,2.125) {$\bullet$};
\end{tikzpicture}
\caption{\label{fig:atlast} The cochain complex $E^\ast$.}
\end{center}
\end{figure}
We note that it is possible to get the number of homological degree $6$ objects down to $3$, but we only need the one depicted in the next step.

The last step is to form $E^\ast\otimes \CK^\ast(\sigma_2)$, deloop and cancel. Notice that the single generator depicted in homological degree $6$ leads to two generators in homological degree $7$ after delooping, and the one with the larger quantum grading is needed to cancel a generator in homological degree $8$.

This leads to (\ref{eq:l3ending}). Notice that we have a few more generators coming from $E^6\otimes \CK^1(\sigma_2)$ in homological degree $7$, but these do not map to any of the surviving generators in homological degree $8$. It is possible to cancel them with generators in homological degree $6$, but this would require us to keep track of a larger part of the cochain complex $E^\ast$.

\begin{remark}
It is possible to do the cancellations in a different order, which can result in different cochain complexes. The above listed complexes were in fact also obtained by a computer programme, `SKnotJob', written by the second author. The available version of SKnotJob does not have this feature, one has to change a few lines in the source code to get the necessary output.
But in order to interpret this output correctly, a deeper understanding of the programme is necessary. We can provide the interested reader with the various outputs, together with information on how to interpret it.
\end{remark}

\bibliography{KnotHomology}
\bibliographystyle{amsalpha}

\end{document}